\title{Fundamental invariants of systems of ODEs of higher order}
\author{Boris Doubrov \and Alexandr Medvedev}
\address{Boris Doubrov, Department of Applied Mathematics and Computer Science, Belarusian State University, Nezavisimosti Ave.~4, Minsk 220030, Belarus}
\email{doubrov@bsu.by}
\address{Alexandr Medvedev, Department of Mathematics and Statistics, Masaryk University, Kotl\'a\v rsk\'a 2, Brno 61137, Czech Republic}
\email{medvedeva@math.muni.cz} 
\keywords{Equivalence of ODEs, point invariants, Cartan connections, graded Lie algebras}
\subjclass[2010]{17B70, 34C14, 53B15}
\newcommand{\R}{\mathbb{R}}
\newcommand{\E}{\mathcal E}
\newcommand{\Sop}{\mathcal S}
\newcommand{\J}{\textbf{J}}
\newcommand{\D}{\mathcal D}
\newcommand{\cG}{\mathcal G}
\newcommand{\dd}[1]{\frac{\partial}{\partial #1}}
\newcommand{\p}{\partial}
\renewcommand{\th}{\theta}
\newcommand{\Th}{\Theta}
\newcommand{\ot}{\otimes}
\newcommand{\op}{\oplus}
\newcommand{\we}{\wedge}
\newcommand{\om}{\omega}
\newcommand{\Om}{\Omega}
\newcommand{\al}{\alpha}
\newcommand{\tfp}{\operatorname{trfp}}
\newcommand{\g}{\mathfrak{g}}
\newcommand{\ga}{\mathfrak{a}}
\newcommand{\gsl}{{\mathfrak{sl}}}
\newcommand{\sltw}{{\mathfrak{sl}(2,\R)}}
\newcommand{\gl}{\mathfrak{gl}} 
\newcommand{\gh}{\mathfrak h}
\newcommand{\glm}{{\mathfrak {gl}}(m,\R)}
\newcommand{\im}{\operatorname{im}}
\newcommand{\Hom}{\operatorname{Hom}}
\newcommand{\End}{\operatorname{End}}
\newcommand{\Inv}{\operatorname{Inv}}
\newcommand{\Id}{\operatorname{Id}}
\newcommand{\tr}{\operatorname{tr}}
\newcommand{\Ad}{\operatorname{Ad}}
\newtheorem{thm}{Theorem}
\newtheorem*{thm*}{Theorem}
\newtheorem{prop}{Proposition}
\newtheorem{lem}{Lemma}
\theoremstyle{definition}
\newtheorem{dfn}{Definition}
\newtheorem{ex}{Example}
\thanks{The second author is supported by the project CZ.1.07/2.3.00/20.0003
  of the Operational Programme Education for Competitiveness of the Ministry
  of Education, Youth and Sports of the Czech Republic.}
\begin{document}
\begin{abstract}
We find the complete set of fundamental invariants for systems of ordinary differential equations of order $\ge 4$ under the group of point transformations generalizing similar results for contact invariants of a single ODE and point invariants of systems of the second and the third order.

It turns out that starting from systems of order $(k+1)\ge 4$, the complete set of fundamental invariants is formed by $k$ generalized Wilczynski invariants coming from the linearized system and an additional invariant of degree $2$.
\end{abstract}

\maketitle

\section{Introduction}

\subsection{Notion of invariants of differential equation}
The geometry and local equivalence of ordinary differential equations has a long history.  It starts with pioneer works of Sophus Lie who introduced the notion of Lie pseudogroups and suggested a method of finding invariants of various geometric objects under the action of a certain pseudogroup.

In this paper we consider systems of ordinary differential equations viewed up to so-called pseudogroup of point transformations. In coordinate notation this means we treat systems of $m\ge 2$ equations of order $k+1\ge 4$:
\begin{equation}\label{eq1}
 \left(y^i\right)^{(k+1)}=f^i\left(\left(y^j\right)^{(s)},x\right), \quad s=0,\dots,k,
\end{equation}
up to arbitrary locally invertible changes of independent variable $x$ and dependent variables $y^i$, $1\le i\le m$. The reason for denoting the order of the system of ODEs by $(k+1)$ is purely technical and is linked with the standard 
notation for irreducible $\sltw$-modules.

By a \emph{(point) invariant of order $r$} of system~\eqref{eq1} we mean a function depending on the right hand side of~\eqref{eq1} and its partial derivatives up to order $r$ that is not changed under the action of the pseudogroup of point transformations. In the current paper we shall outline a method to construct (all) point invariants of~\eqref{eq1}, but will not actually compute them explicitly. Instead we shall work with so-called \emph{relative invariants} and find the minimal set of them that generates all other invariants (absolute and relative) via operation of invariant differentiation. We call such set of relative invariants \emph{fundamental invariants}. 

Naturally, all non-constant invariants constructed in this paper will vanish for the \emph{trivial system of equations}, i.e., the system of form~\eqref{eq1} with vanishing right hand side. As an immediate application, we get an explicit characterization of the class of \emph{trivializable systems of ODEs}, i.e., the systems that are equivalent to the trivial equation under point transformations. So, fundamental system of invariants can be thought as a minimal set of differential relations describing the orbit of the trivial equation.

The notions of relative and absolute invariants can be formalized in a rigorous way using the language of jet spaces. Namely, first we consider the jets $J^{k+1}(\R,\R^m)$ of maps from $\R$ to $\R^m$ along with natural projections $\pi_{k+1,l}\colon J^{k+1}(\R,\R^m) \to J^l(\R,\R^m)$ for any $k\ge l\ge 0$. Then the system of ODEs of order $(k+1)$ is a submanifold $\E$ of codimension $k+1$ in $J^{k+1}(\R,\R^{m})$ transversal to fibers of the projection $\pi_{k+1,k}$. Locally this is equivalent to defining the section $s\colon J^{k}(\R,\R^m)\to J^{k+1}(\R,\R^m)$ of the projection  $\pi_{k+1,k}$, and equations~\eqref{eq1} are just coordinate expressions of this section. A solution of the equation $\E$ is an arbitrary map $y\colon \R\supset U\to \R^m$ such that its $(k+1)$-st prolongation $y^{(k+1)}=\{[y]^{k+1}_x\mid x\in U\}$ belongs to~$\E$.   Here by $[y]^{k+1}_x$ we denote the $(k+1)$-jet of map $y$ at $x\in \R$. 

Recall that we can identify $J^0(\R,\R^m)$ with $\R\times \R^m=\R^{m+1}$, and any local diffeomorphism $\phi\colon  \R^{m+1}\to \R^{m+1}$ has a unique prolongation $\phi^{(k+1)}\colon J^{k+1}(\R,\R^m)\to J^{k+1}(\R,\R^m)$ compatible with prolongation operation as well as with projections $\pi_{k+1,l}$. The family of all prolongations $\phi^{(k+1)}$ is exactly the Lie pseudogroup of point transformations $\D^{k+1}(\R,\R^m)$ acting on submanifolds $\E\subset J^{k+1}(\R,\R^m)$ corresponding to the systems of ODEs of order $(k+1)$.

Denote by $\J^{k+1,r}$ the space of $r$-jets of all equation submanifolds~$E$ in $J^{k+1}(\R,\R^m)$. It is an open subset in the manifold of $r$-jets of all submanifolds of codimension $k+1$ in $J^{k+1}(\R,\R^m)$. The action of the pseudogroup $\D^{k+1}=\D^{k+1}(\R,\R^m)$ is naturally prolonged to $\J^{k+1,r}$. 

\begin{dfn}\label{dfn:1}
An \emph{(absolute) invariant of order $r$ for the systems of ODEs of order $(k+1)$} is a function on $J^{k+1,r}$ (or on an open dense $\D^{k+1}$-invariant subset therein) which is preserved by the prolonged action of $\D^{k+1}$. A \emph{relative invariant of order $r$} is a $\D^{k+1}$ equivariant map $I\colon \J^{k+1,r} \to V$ taking values in a finite-dimensional representation of the pseudogroup $\D^{k+1}$.
\end{dfn}

Let us list a number of known invariants for ODEs and systems of ODEs.
\begin{ex}
\emph{Wilczynski invariants} were first introduced by Wilczynski~\cite{wil} as a fundamental set of invariants for linear scalar ODEs on one function $y(x)$ viewed up to Lie preudogroup $(x,y)\mapsto (\lambda(x),\mu(x)y)$. The generalization of Wil\-czynski invariants to systems of linear ODEs was obtained by Se-ashi~\cite{se93}. 

Consider an arbitrary system of linear ordinary differential equations of order $k+1$:
\[
y^{(k+1)}+P_k(x)y^{(k)}+\dots+P_0(x)y(x)=0,
\]
where $y(x)$ is an $\R^m$-valued vector function. It can always be brought to the so-called canonical \emph{Laguerre--Forsyth form} of these equations defined by conditions $P_{k}=0$ and $\tr P_{k-1}=0$. Then the following expressions:
\begin{equation}\label{wil_inv}
\Theta_r = \sum_{j=1}^{r-1} (-1)^{j+1}
\frac{(2r-j-1)!(k-r+j)!}{(r-j)!(j-1)!} P_{k-r+j}^{(j-1)}, \quad
r=2,\dots, k+1.
\end{equation}
are the $\End(\R^m)$-valued relative invariants of the initial system, where each invariant $\Th_r$ has a degree $r$. Note that the first non-trivial Wilczynski invariant has degree $2$ and is trace-free. In particular, it vanishes identically in the scalar case, but is a non-trivial invariant in case of systems of $m\ge 2$ linear equations.

\emph{Generalized Wilczysnki invariants} $W_r$, $r=2,\dots,k+1$ of system~\eqref{eq1} are defined as invariants $\Theta_r$ evaluated at the linearization of the system. Formally they are obtained from~\eqref{wil_inv} by substituting $P_r(x)$ by matrices $-\left(\frac{\p f^i}{\p (y^j)^{(r)}}\right)$ and the usual derivative by the total derivative:
\[
\frac{d}{dx} = \dd{x} + y^{(1)}\dd{y}+ \dots+y^{(k-1)}\dd{y^{(k-2)}}+f\dd{y^{(k-1)}}.
\]

They turn out to be relative invariants of the non-linear system of ODEs in the sense of Definition~\ref{dfn:1}. See~\cite{dou08} for more details. 
\end{ex}

\begin{ex}
One of the simplest cases, when the complete set of fundamental invariants is known, is the class of second order ODEs $y''=f(x,y,y')$ viewed up to the pseudogroup of all point transformations. The fundamental invariants in this case
was computed by Tresse~\cite{tresse96} at the end of the 19-th century. The simplest non-trivial relative invariants appear only at order $4$: 
\begin{align*}
    I_1 &= f_{1111} = \frac{\partial^4 f}{\left(\partial y'\right)^4};\\
    I_2 &= \frac16f_{11xx}-\frac16f_1f_{11x}-\frac23f_{01x}+
           \frac23f_1f_{01}+f_{00}-\frac12f_0f_{11}.
\end{align*}
Here we use the standard classical notation used in case of a single ODE of order $k$: the subscript $i=0,\dots,k-1$ means the partial derivative with respect to $y^{(i)}$ and the subscript $x$ means the total derivative.

The geometry lying behind this equation was explored by E.~Cartan~\cite{car24}, who
associated the the canonical coframe with this equation and proved
that all its invariants can be derived by covariant
differentiation from the above two relative invariants. In particular, the general equation is equivalent to the trivial one
if and only if $I_1=I_2=0$. See also~\cite{ns03} for the interpretation of these invariants in terms of the associated Fefferman metric.
\end{ex}

\begin{ex}
In the case of a single ODE of the order $\ge3$ there are two kinds of pseudogroups (and invariants) typically considered in the applications. Namely, the largest pseudogroup acting on ODEs of the fixed order (and preserving this order) is a so-called \emph{pseudogroup of contact transformations}. These are the transformations of $J^k(\R,\R)$ of the form $\phi^{(k-1)}$, where $\phi$ is a local transformation of $J^1(\R,\R)$ preserving the natural contact structure on it.

The equivalence problem of the 3rd order ODEs $y'''=f(x,y,y',y'')$ up to contact transformations was studied by S.-S. Chern~\cite{chern40}, who found the following set of fundamental invariants in this case:
\begin{align*}
    I_1 &= f_{2222} = \frac{\partial^4 f}{\left(\partial y''\right)^4};\\
    W &= -f_0-\frac{1}{3}f_1f_2-\frac{2}{27}f_2^3+\frac{1}{2}f_{1x}+
           \frac{1}{3}f_2f_{2x}-\frac{1}{6}f_{2xx}.
\end{align*}
The second of these invariants was first found by W\"unschmann~\cite{wun05} back in 1905 and is usually called \emph{W\"unschmann invariant} in his honor.

The point geometry of the 3rd order ODEs is quite different and was studied by Elie Cartan~\cite{car41}. It turns out that a given equation $y'''=f(x,y,y',y'')$ is equivalent to the trivial one up to point transformations if and only if the following conditions are satisfied:
\begin{align*}
& W = f_{222}  = f_{22}^2+ 6f_{122}+2f_2f_{222} = 0;\\
& C=f_{11}+2W_{2}-2f_{02}+\tfrac{2}{3}f_2f_{12}+2f_{22}\left(\tfrac{1}{3}f_{2x}-\tfrac{2}{9}f_2^2-f_1\right) = 0,
\end{align*}
where $W$ is the above W\"unschmann invariant and $C$ is a so-called Cartan invariant~\cite{car41}. 

Note that the W\"unschmann invariant $W$ for scalar 3rd order ODE is exactly the lowest degree generalized Wilczynski invariant in case of scalar ODEs.
\end{ex}

In the following examples and further in the paper we follow the Einstein summation convention and denote by $\tfp$ the trace free part of an arbitrary tensor. 

\begin{ex}
Systems of  the second order were studied in the work of Mark Fels~\cite{fels95}. Starting from an arbitrary system of the 2nd order ODEs:
\[
(y^i)'' = f^i(x,y^j,(y^k)'),\quad i=1,\dots,m,
\]
he constructs an absolute parallelism solving the equivalence problem as well as two fundamental invariants. They appear in the degree $2$ and $3$ and are equal to the following tensors:
\begin{align*}
(W_2)^i_j &=\tfp \left( \frac{\p f^i}{\p y^j}-\frac12 \frac{d}{dx}\left(\frac{\p f^i}{\p p^j}\right)+
\frac14\frac{\p f^i}{\p p^r}\frac{\p f^r}{\p p^j}\right);\\
(I_3)^i_{jkl} &= \tfp \left( \frac{\p^3 f^i}{\p p^j\p p^k\p p^l} \right),
\end{align*}
where by $p^i$ we denote $(y^i)'$ , $i=1,\dots,m$, which together with $x$ and $y^j$ form local coordinates on $J^1(\R,\R^m)$, and, as above, $\frac{d}{dx}$ denotes the operator of total derivative with respect to $x$. 

Note that the above invariant $W_2$ is the simplest non-trivial example of a generalized Wilczynski invariant in case of systems of ODEs. 
\end{ex}

We note that in case of systems of ordinary differential equations the classes of point and contact transformations  coincide and we shall always consider the pseudogroup of point transformations in case of systems of ODEs without mentioning this explicitly.

\begin{ex}
Fundamental invariants for systems of the third order were computed in the works of Alexandr Medvedev~\cite{medv12}. Then for the systems of $m$ ODEs of the 3rd order the following tensors form the minimal set of fundamental invariants:
\begin{enumerate}
\item two generalized Wilczynski invariants $W_2$ and $W_3$:
\begin{align*} 
\left(W_2\right)^i_j =& \tfp \left(\frac{\p f^i}{\p p^j} -
\frac{d}{dx} \frac{\p f^i}{\p q^j} +
\frac{1}{3} \frac{\p f^i}{\p q^k}\frac{\p f^k}{\p q^j} \right), 
\\
\left(W_3\right)^i_j =&
\frac{\partial f^i}{\partial y^j} +\frac{1}{3} \frac{\partial f^i}{\partial q^k}\frac{\partial f^k}{\partial p^j} - \frac{d}{dx} \frac{\partial f^i}{\partial
p^j} + \frac{2}{3}\frac{d^2}{dx^2} \frac{\partial f^i}{\partial
q^j} + \frac{2}{27}\left(\frac{\partial f^i}{\partial q^j}\right)^3 \\ 
&-\frac{4}{9}
\frac{\partial f^i}{\partial q^k}\frac{d}{dx}\frac{\partial f^k}{\partial q^j} 
 -\frac{2}{9} \frac{d}{dx}\left(\frac{\partial f^i}{\partial q^k}\right)\frac{\partial f^k}{\partial q^j} - 2\delta^i_j\frac{d}{dx}H^x;
\end{align*}
\item two additional invariants of the degree $2$ and $4$:
\begin{align*}
\left(I_2\right)^i_{jk} =& \tfp\left(\frac{\p^2 f^i}{\p q^j \p q^k} \right), 
\\
 \left(I_4\right)_{jk} =& - \frac{\p H_k^{-1}}{\p p_j}+\frac{\p}{\p q_j}\frac{\p}{\p q_k}H^x - \frac{\p}{\p q_k}\frac{d}{dx}H_j^{-1} \\ 
&\qquad\qquad\qquad - \frac{\p }{\p q^k}\left(H_l^{-1}\frac{\p f^l}{\p q^j}\right)+2H_j^{-1} H_k^{-1},
\end{align*}  
where 
\begin{align*}
H_j^{-1}&=\frac{1}{6(m+1)}\left(\frac{\p^2 f^i}{\p q^i \p q^j}
 \right),\\
H^x &=-\frac{1}{4m}\left(\frac{\p f^i}{\p p^i} -
\frac{d}{dx} \frac{\p f^i}{\p q^i} +
\frac{1}{3} \frac{\p f^i}{\p q^k}\frac{\p f^k}{\p q^i} \right). 
\end{align*}
\end{enumerate} 
\end{ex}

\begin{ex}
Fundamental invariants for scalar ODEs of order $4$ where computed by Robert Bryant~\cite{bry4ord} and for orders $\ge 5$ by Boris Doubrov~\cite{dou01}. It turns out that apart from generalized Wilczynski invariants there are only 2 additional invariants for ODEs of order $4$, $5$, $6$ and three additional invariants for orders $\ge 7$. 
They are given by:
\begin{itemize}
\item invariant $I_3=f_{333}$ for 4th order ODE  and invariant $I_2=f_{k,k}$ for $(k+1)$-th order ODE, $k+1\ge 5$;
\item extra invariants:\\
$k+1=4$: $J_4= f_{233}+\frac16f_{33}^2+\frac98f_3f_{333}+\frac34f_{333x}$;\\
$k+1=5$: $J_6 = f_{234}-\frac23f_{333}-\frac12f_{34}^2\mod I_2, W_3$;\\
$k+1\ge 6$: $J_3= f_{k,k-1} \mod I_2$;\\
$k+1\ge 7$: $J_4=f_{k-1,k-1} \mod I_2,J_3,W_3$.
\end{itemize}
\end{ex}

To summarize the above examples, we see that the fundamental invariants of ODEs were computed up to now for a single ODE of an arbitrary order (under the pseudogroup of contact transformations) and for systems of ODEs of order~$2$ and~$3$. For systems of ODEs of higher order only a part of invariants (namely, generalized Wilczynski invariants) was known.

This paper closes this gap and computes the complete set of fundamental invariants in all remaining cases, namely for systems of ODEs of order $\ge 4$.  
\begin{thm*}
The following relative invariants form a fundamental set for systems of $m\ge 2$ ODEs of order $k+1 \ge 4$:
\begin{enumerate}
\item generalized Wilczynski invariants $W_r$ of the degree $r$, $2\le r \le k+1$;
\item one additional invariant $I_2$ of the degree $2$:
\[
(I_2)^i_{jl}=  \frac{\p^2 f^i }{\p y_k^j \p y_k^l}.
\]
\end{enumerate}
\end{thm*}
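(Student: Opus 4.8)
The plan is to recast the point equivalence of systems~\eqref{eq1} as an equivalence problem for a filtered geometric structure on the equation manifold $\E$, and then to apply the Tanaka--Se-ashi theory of canonical Cartan connections. First I would compute the symbol of the Cartan distribution on $\E\cong J^k(\R,\R^m)$: a graded nilpotent Lie algebra $\m=\m_{-(k+1)}\op\dots\op\m_{-1}$ in which $\m_{-1}$ is spanned by the total derivative $D$ together with the $m$ vertical vectors $\p/\p(y^j)^{(k)}$, while each deeper layer $\m_{-j}$, $2\le j\le k+1$, is produced by bracketing with $D$ and is isomorphic to $\R^m$. The essential point is that $D$ acts down this tower as the lowering operator of an $\sltw$-string, so that $\m$ is modelled on $V_k\ot\R^m$, where $V_k$ is the $(k+1)$-dimensional irreducible $\sltw$-module; this is the structural origin of the index $k+1$. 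The flat model is the trivial system, whose symmetry algebra is the Tanaka prolongation $\g=(\sltw\op\glm)\ltimes(V_k\ot\R^m)$, with reductive part $\g_0\cong\glm\op\R$ acting on the $y$-directions and carrying the grading element.

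Next I would invoke existence and uniqueness of a normal Cartan connection $\om$ of type $(\g,\g_0)$ modelled on the trivial system; because $\g$ is not semisimple this requires the Se-ashi version of the normalization condition rather than the parabolic one. The curvature $\kappa$ of $\om$ is then a complete obstruction to local triviality, and the standard argument shows that every point invariant of~\eqref{eq1} is a covariant derivative of the lowest-order, harmonic component $\kappa_H$ of $\kappa$. Identifying the fundamental invariants with the $\g_0$-irreducible pieces of $\kappa_H$ reduces the theorem to a computation of the positive-degree harmonic curvature, that is, of the relevant part of $H^2(\m,\g)$.

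The core computation I would organize along the $\m$-module extension
\[
0\longrightarrow V_k\ot\R^m\longrightarrow\g\longrightarrow\sltw\op\glm\longrightarrow0,
\]
whose long exact sequence separates the curvature into a part with coefficients in the semisimple quotient and a part with coefficients in the abelian ideal. For the quotient the computation coincides, via the linearization of~\eqref{eq1}, with Se-ashi's classification of invariants of a linear system of order $k+1$: the surviving components occupy degrees $2,3,\dots,k+1$ and are represented exactly by the generalized Wilczynski invariants $W_r$. The abelian ideal contributes, for $k+1\ge4$, a single harmonic class in degree $2$ of $\glm$-type $\R^m\ot\operatorname{Sym}^2(\R^m)^*$; tracing the Kostant representative through the normalization identifies it with the Hessian of the right-hand side in the top-order variables, $(I_2)^i_{jl}=\p^2 f^i/\p y_k^j\,\p y_k^l$. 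It then remains to carry out the degree bookkeeping and confirm that once $k+1\ge4$ no further positive-degree classes appear, which is what separates this case from the orders $2$ and $3$, where additional components (such as the degree-$3$ and degree-$4$ invariants seen above) are present.

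The step I expect to be the main obstacle is precisely this cohomological computation for the non-semisimple algebra $\g$. Kostant's theorem is not available verbatim, so the harmonic theory has to be built by hand for the semidirect product, and one must verify both that the chosen normalization renders $\kappa_H$ genuinely harmonic and that the abelian summand contributes nothing beyond $I_2$ in positive degree. The second delicate point is the explicit matching of the abstract degree-$2$ class with the concrete tensor $I_2$ and, dually, the proof that the order threshold $k+1\ge4$ is exactly where the number of harmonic components drops to the Wilczynski family together with a single extra invariant.
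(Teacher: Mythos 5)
Your overall framework --- a normal Cartan connection adapted to the filtration of $\E$, reduction of all invariants to the harmonic part of the curvature, and a computation of $H^2_{+}(\g_{-},\g)$ --- is exactly the paper's. The genuine gap is in the last step: you assume, as one would in parabolic geometry, that every positive-degree class in $H^2(\g_{-},\g)$ is realized by a non-vanishing fundamental invariant, so that the theorem follows once the cohomology is seen to consist of the Wilczynski classes plus a single degree-$2$ class. That premise fails here, and the cohomology itself does not reduce that far. The subspace $E^{0,2}_2=\Inv_x H^2(V,\g)$ contains, beyond degree $2$, a large family of positive-degree classes in $\Inv_x(\Hom_{+}(\wedge^2 V,V)/\im \Sop^1)$ (visible already from $\ker x^{k+1}$ acting on $\wedge^2 V^*\ot W$), plus, for $m=2$, $k=3$, an exceptional class $\ker\Sop^2$ coming from the split form of $G_2$. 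Reading off harmonic curvature components directly would therefore announce extra fundamental invariants that do not exist. The resolution requires an input that is not cohomological: a parametric computation of part of the normal Cartan connection (Proposition~\ref{prop1}) showing that the $\Hom(\wedge^2 F,W)$-part of the curvature always lies in $\im\delta$ --- morally because the underlying contact distribution on $J^k(\R,\R^m)$ is flat, so the corresponding distribution invariants vanish identically --- and only after imposing this additional linear condition does the effective part of $E^{0,2}_2$ collapse to the single degree-$2$ module $S^2(W^*)\ot W$ (Theorem~\ref{thm:E02}). The $m=2$, $k=3$ class must separately be shown to produce nothing new, which the paper does by verifying that the relevant curvature coefficient lies in the differential ideal generated by $I_2$.

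A secondary issue: your splitting of the curvature by the coefficient sequence $0\to V\to\g\to\ga\to 0$ assigns the Wilczynski invariants to the part with coefficients in the reductive quotient, but in fact both the Wilczynski invariants and $I_2$ take values in the abelian ideal $V$ (they live in $\Hom(\R x\ot V,V)$ and $\Hom(\wedge^2 V,V)$ respectively), while the $\ga$-valued parts contribute only $\ker\Sop^1=0$ and the exceptional $\ker\Sop^2$. The paper instead runs the Serre--Hochschild spectral sequence for the ideal $V\subset\g_{-}$, which cleanly separates $E^{1,1}_2$ (the Wilczynski family) from $E^{0,2}_2$ ($I_2$ together with the phantom classes). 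Your bookkeeping is repairable, but as stated it would misplace where each invariant arises and would still leave the vanishing problem above untouched.
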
 

The paper is organized as follows. In Section~\ref{sec:cc} we recall the construction of \emph{the normal Cartan connection} from~\cite{dkm} and show that the fundamental set of invariants is described by the cohomology space $H^2_{+}(\g_{-},\g)$, where $\g$ is a symmetry algebra of the trivial system of ODEs equipped with an appropriate grading. It is defined in Subsection~\ref{ss:acc}. 

We compute this cohomology space in Section~\ref{sec:cohom} using the Serre--Hochschild spectral sequence. However, not all non-zero elements of this cohomology space lead to non-trivial fundamental invariants. It turns out that some
of the corresponding fundamental invariants vanish identically. This can be explained as follows. A part of $H^2_{+}(\g_{-},\g)$ corresponds to the invariants of the underlying non-holonomic distribution. In our case this non-holonomic distribution is a contact distribution on the jet space $J^k(\R,\R^m)$. It is a flat distribution of type $\g_{-}$, where $\g_{-}$ can also be viewed as the Tanaka symbol of the contact distribution. 

To find out which fundamental invariants are non-trivial, we perform explicit coordinate computation of a part of the normal Cartan connection in Section~\ref{sec:param} and show that its curvature satisfies certain additional linear relations. Projecting these relations to $H^2_{+}(\g_{-},\g)$, we find out that a large part of this cohomology space produces identically vanishing invariants.  This way we are also able to show that the above invariant of degree 2 is the only additional fundamental invariant that, together with generalized Wilczynski invariants, forms the complete set of fundamental invariants.

Note that this phenomenon, that part of the positive cohomology corresponds to trivial invariants, does not happen in the theory of parabolic geometries~\cite{capslovak}. But is was already observed in earlier papers on the invariants of differential equations~\cite{dou01,medv12}, where only a part of the cohomology space $H^2_{+}(\g_{-},\g)$ corresponds to non-trivial invariants.

Finally, in Section~\ref{coh_M} we show that the \emph{effective part} (i.e., the part corresponding to non-trivial invariants) of the cohomology space $H^2_{+}(\g_{-},\g)$ coincides with the kernel of the natural linear map
\[
\gamma\colon H^2(\g_{-},\g)\to H^2(\g_{-},\bar\g),
\] 
where $\bar\g$ is the (infinite-dimensional) Tanaka prolongation of $\g_{-}$. The latter cohomology space was computed by T.~Morimoto~\cite{mor88}, and the map $\gamma$ is well-defined, since $\g$ is a finite-dimensional subalgebra of $\bar\g$. However, we do not give any conceptual proof of this fact in this paper and consider it merely as a hint that helps us identifying the effective part of the 2nd cohomology space. 

\section{Normal Cartan connection for systems of ODEs}
\label{sec:cc}

One of the main techniques for computing the fundamental invariants for differential equations is Cartan's equivalence method and its further generalization of N.~Tanaka in the context of so-called nilpotent differential geometry. The main advantage of Tanaka's approach is the adaptation of all constructions to the underlying non-holonomic vector distribution (the contact distribution on the jet spaces), the powerful algebraic techniques for constructing a normal Cartan connection (instead of much weaker absolute parallelism structures) and the way to describe the principal part of the curvature (in our terminology this is exactly the set of fundamental invariants) via cohomology of finite-dimensional graded Lie algebras. In this section we outline how the geometry of systems of ODEs fits into the framework of nilpotent differential geometry. Further details and references can be found in~\cite{dkm}.

\subsection{System of ODEs as a filtered manifold}
Let us briefly describe the geometric structures defined by system~\eqref{eq1}. Consider a smooth manifold $M=\R^{m+1}$ with coordinates $(x,y^i)$, where $i=1,\dots,m$. Let $J^{k+1}=J^{k+1}(\R,\R^m)$ be the space of $(k+1)$-jets of smooth maps from $\R$ to $\R^m$. We use the standard local coordinate system in $J^{k+1}(\R,\R^m)$:
\[
(x,y^i_r), \quad 0 \le r \le k+1, 1\le i \le m,
\]
where we assume that $y^i_a$ has the meaning of $a$-th derivative of $y^i(x)$.

System of ODEs~\eqref{eq1} defines a submanifold $\E$ in $J^{k+1}(\R,\R^m)$ by:
\[ 
y^i_{k+1} = f^i\left(y^j_r,x\right) ,\quad i=1,\dots,m, r=0,\dots,k.
\]
The projection $\pi_{k+1,k}\colon J^{k+1}\to J^k$ establishes a (local) diffeomorphism $\E$ with $J^{k}$. With every system of ODEs we associate a pair of distributions on $\E$:
\begin{align*}
E&=\left\langle \frac{\partial}{\partial x}+\sum_{r=1}^k y_r^i\frac{\partial}{\partial y_{r-1}^i} + f^i\frac{\partial}{\partial y_{k}^i}\right\rangle,\\
V&=\left\langle \frac{\partial}{\partial y^i_k}\right\rangle,
\end{align*}
where $i=1,\dots,m$. The distribution $C^{-1}=E\oplus V$ is mapped to the standard contact distribution on $J^k$ under local diffeomorphism $\pi_{k+1,k}\colon \E\to J^k$. In particular, it is bracket generating, and its weak derived series defines a filtration of the tangent bundle $T\E$:
\[
C^{-1}\subset C^{-2} \subset \dots \subset C^{-k-1} = T\E ,
\]
where $C^{-i-1}=C^{-i}+[C^{-i},C^{-1}]$. 

We call an arbitrary coframe $\om_x, \om^i_{-r}$, $1\le i\le m, 1\le r\le k+1$, on $\E$ \emph{adapted to the equation~\eqref{eq1}}, if
\begin{enumerate}
\item[(a)] the annihilator of forms $\om_{-r-1}^i,\dots,\om_{-k-1}^i$ is equal to $C^{-r}$ for all $r=1,\dots,k$;
\item[(b)] the annihilator of forms $\om_{-1}^i,\dots,\om_{-k-1}^i$ is equal to $E$;
\item[(c)] the annihilator of $\om_x,\om_{-2}^i,\dots,\om_{-k-1}^i$ is equal to $F$.
\end{enumerate}

We call an adapted coframe \emph{regular}, if in addition it satisfies the condition: 
\begin{enumerate}
\item[(d)] $d\om_{-r}^i + \om_x \wedge \om_{-r+1}^i = 0 \mod \langle \om_{-r}^j,\dots,\om^j_{-1} \rangle$ for all $2\le r\le k+1$.
\end{enumerate}

\subsection{Adapted Cartan connections}\label{ss:acc}
Let $\g$ be the symmetry algebra of the trivial system of $m$ ODEs of order $(k+1)$. In the sequel we assume that $k\ge3$, $m\ge2$, that is we consider only systems of ordinary differential equations of an order $\ge 4$. Under these conditions the Lie algebra $\g$ is isomorphic to the semidirect product of a reductive Lie algebra $\ga=\gsl(2,\R)\times \gl(m,\R)$ and an abelian ideal $V=V_k\otimes W$, where $V_k$ is an irreducible $\gsl(2,\R)$-module isomorphic to $S^k(\R^2)$ and $W=\R^m$ is the standard $\gl(m,\R)$-module.

Let us fix a basis of $\g$. Let:
\[ 
x=\begin{pmatrix} 0 & 0\\ 1 & 0 \end{pmatrix},\quad
y=\begin{pmatrix} 0 & 1\\ 0 & 0 \end{pmatrix},\quad
h=\begin{pmatrix} -1 & 0\\ 0 & 1 \end{pmatrix}
\]
be a basis for $\gsl(2,\R)$. Fix a basis of $\sltw$-module $V_k$ consisting of elements $v^i=f_2^{k-i}f_1^i/i!$, where $f_1,f_2$ is the standard basis in $\R^2$. We denote by $\{e_1,\dots,e_m\}$ and $\{e^i_j\}$ the standard bases of $\R^m$ and $\gl(m,\R)$ respectively. (That is we have $e^i_je_k=\delta_{ik}e_j$.)

The degrees of elements in the Lie algebra $\g$ are defined as follows:
\begin{align*}
\g_1&=\R y,\\
\g_0&=\R h\oplus\gl(m,\R),\\
\g_{-1}&=\R x\oplus\R v^k\otimes W,\\
\g_{-i}&=\R v^{k+1-i}\otimes W, \quad i=2,\dots,k+1,
\end{align*}
and $\g_n=\{0\}$ for all other $n\in\mathbb Z$.
As we see, the negative part $\g_{-}$ of $\g$ is equal to $\R x\oplus V$. We denote the non-negative part of the Lie algebra~$\g$ by $\gh$.

Globally, we can define a Lie group $G$ as a semidirect product of $SL(2,\R)\times GL(m,\R)$ and the commutative group $V=V_k\otimes \R^m$. Let $H$ be the direct product of subgroup $ST(2,\R)$ of all lower-triangular matrices in
$SL(2,\R)$ and $GL(m,\R)$. Then $G/H$ can be identified with the trivial equation $\E_0\subset J^{k+1}(\R,\R^m)$.

Our next goal is to construct a Cartan connection on $\E$, modeled on the homogeneous space $G/H$, that will be naturally associated with the equation~\eqref{eq1}. Such Cartan connection consists of a principal $H$-bundle $\pi \colon \cG\to \E$ and the $\g$-valued differential form $\om$ on $\cG$ such that
\begin{enumerate}
\item $\om(X^*)=X$ for all fundamental vector fields $X^*$ on $\cG$, $X\in\gh$;
\item $R_h^*\om = \Ad h^{-1}\om$ for all $h\in H$; 
\item $\om$ defines an absolute parallelism on $\cG$.
\end{enumerate}

Any $\g$-valued form $\om$ can be written as
 \[
 \omega =\sum_{r=1}^{k+1}\omega^i_{-r} (v^{k+1-r}\otimes e_i) + \omega_x x  + \omega_h h +
\omega^i_j e^j_i + \omega_y y ,
\]
where $\omega^i_{-r}$, $\omega^i_j$, $\omega_x$, $\omega_h$, $\omega_y$ are 1-fomrs on $\cG$.

We say that a Cartan connection $\om$ on a principal $H$-bundle $\pi\colon \cG\to \E$ is \emph{adapted to the equation~\eqref{eq1}}, if for any local section $s$ of $\pi$ the set $\{s^*\om_x, s^*\om^i_{-r}\}$ is an adapted coframe on $\E$. The Cartan connection $\omega$ is said to be \emph{regular}, if the above coframe is also regular.
It is easy to see that this definition does not depend on the choice of the local section~$s$.

Denote by $C^q(\g_{-},\g)$ the space of all $q$-cochains on $\g_{-}$ with values in $\g$. Any Cartan connection $\om$ modeled by the homogeneous space $G/H$ determines the curvature tensor $\Om=d\om+1/2[\om,\om]$ on $\cG$ and the curvature function $c\colon \cG\to C^2(\g_{-},\g)$, where
\begin{equation*}
  c_p(u,v)=\Om_p(\om^{-1}_p(u),\om^{-1}_p(v))\quad\text{for all  }u,v\in\g_{-},\ p\in \cG.
\end{equation*}
This function satisfies the condition
\begin{equation}\label{eq:3}
c(ph)=h^{-1}.c(p)\quad\text{for all }  h\in H,p\in \cG,
\end{equation}
where $H$ acts on $C^2(\g_{-},\g)$ in the natural way.

Since $\g_{-}$ and $\g$ are graded, all spaces $C^q(\g_{-},\g)$ inherit the gradation
\begin{gather*}
  C^q(\g_{-},\g)=\sum_r C^q_r(\g_{-},\g),\\
  \intertext{where}
  C^q_r(\g_{-},\g)=\{ \alpha\in C^q(\g_{-},\g)\mid \alpha(\g_{i_1},\dots,\g_{i_q})\subset \g_{i_1+\dots+i_q+r}\}.
\end{gather*}
The standard cochain differential
\begin{equation*}
 \partial\colon C^q(\g_{-},\g)\to C^{q+1}(\g_{-},\g)
\end{equation*}
preserves this gradation. Decompose the curvature function $c$ to the sum $c=\sum_r c_r$, where each $c_r$ takes values in $C^2_r(\g_{-},\g)$. It is easy to see that regularity of the Cartan connection implies that $c_r=0$ for all $r\le 0$. In other words,  the curvature function of a regular Cartan connection takes values in $C^2_{+}(\g_{-},\g)$.

The space $C^2(\g_{-},\g)$ admits also another $\gh$-invariant decomposition coming from the decomposition of $\g$ into the sum of the reductive part $\ga=\gsl(2,\R)\times \gl(m,\R)$ and the abelian ideal $V$. We note that
\[
\wedge^2 \g_{-} = \wedge^2 (\R x + V) \cong \R x \otimes V + \wedge^2 V.
\]
Therefore, $C^2(\g_{-},\g)$ is naturally decomposed into four subspaces:
\begin{multline*}
C^2(\g_{-},\g) = \Hom(\R x \otimes V, \ga) + \Hom(\R x \otimes V, V) \\ 
+ \Hom(\wedge^2 V, \ga) + \Hom(\wedge^2 V, V).
\end{multline*}
It is easy to see that this decomposition is $\gh$-invariant and is compatible with the above grading. 

The curvature function $c$ of a regular Cartan connection is decomposed accordingly into four summands. We shall be mainly interested in the second and fourth components in this decomposition and shall call them the $\Hom(\R x \otimes V, V)$ and $\Hom(\wedge^2 V, V)$ parts of the curvature (function). 

\subsection{Harmonic theory on on the cochain complex of the symbol algebra}
Generally speaking, there are many regular Cartan connections adapted to a given equation~\eqref{eq1}. The basic idea of choosing a unique one among them is to add linear conditions on structure function. Finding these conditions is not easy since they should guarantee the existence and uniqueness of the required Cartan connection and at the same time they must be invariant with respect to the action of $H$ on $C^2(\g_{-},\g)$ because of property~\eqref{eq:3}
of the curvature function $c$. Fortunately, as it was shown by T.~Morimoto~\cite{mor89,mor93}, they can be derived from the ``harmonic theory'' on our symbol Lie algebra $\g$.

First, we fix a scalar product $(,)$ on $\g$ such that vectors $x,y,h,v^i\otimes e_j, e^i_j$ form an orthogonal basis and
\begin{gather*}
  \langle e^i_j,e^i_j\rangle=1,\quad (v^i\otimes e_j, v^i\otimes e_j) = (k-i)!/i!, \quad 0\le i\le k;\\
  (x,x)=(y,y)=1, (h,h)=2.
\end{gather*}
This metric $(,)$ is chosen in such a way that
\begin{enumerate}
\item all spaces $\g_i$ are mutually orthogonal;
\item $(S,T) = \tr {}^tST$ for all $S,T\in\sltw\times \gl(m,\R)$;
\item $(Su,v)=(u,{}^tSv)$ for all   $S\in\sltw\times \gl(m,\R)$, $u,v\in V$, so that the transposition with respect to
  this metric on $V$ agrees with standard matrix transpositions in $\sltw$ and $\gl(m,\R)$.
\end{enumerate}

Then we extend this metric to the spaces $C^q(\g_{-},\g)$ in the standard way and denote by
\begin{equation*}
\partial^*\colon C^{q+1}(\g_{-},\g)\to C^q(\g_{-},\g)
\end{equation*}
the operator adjoint to the cochain differential $\partial$.

Finally, we add one more condition on our Cartan connection adapted to equation~\eqref{eq1}.
\begin{prop}[\cite{dkm,mor89}] Among all Cartan connections adapted to equation~\eqref{eq1} there exists a unique (up to isomorphism) Cartan connection whose structure function is co-closed, i.e., $\partial^* c = 0$.
\end{prop}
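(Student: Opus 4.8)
The plan is to follow the harmonic normalization scheme of Tanaka and Morimoto: starting from any adapted regular Cartan connection, whose existence is guaranteed by the construction recalled above (see~\cite{dkm}), I would exhibit a unique gauge, within the class of adapted connections, for which the curvature function becomes co-closed. The whole argument is organized by the homogeneity grading of the complex $C^\bullet(\g_{-},\g)$, and it rests on two algebraic inputs: the orthogonal decomposition $C^2_r(\g_{-},\g)=\im\partial\oplus\ker\partial^*$, valid in each degree $r$ because $\ker\partial^*=(\im\partial)^\perp$ for the chosen metric, and the $H$-invariance of the normalization condition $\partial^* c=0$.

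First I would pin down the gauge freedom. Conditions (a)--(d) constrain only the $\g_{-}$-valued soldering part of a connection form; hence, after pulling back along a bundle isomorphism matching their soldering parts, two adapted regular connections $\om$ and $\om'=\om+\phi$ differ by a horizontal, $H$-equivariant, $\gh$-valued $1$-form $\phi$. Such a $\phi$ is the same datum as an $H$-equivariant function $\cG\to\Hom(\g_{-},\gh)$, and since a homomorphism $\g_{-i}\to\g_j$ with $j\ge 0$ has homogeneity $i+j\ge 1$, this function takes values in the strictly positive part of $C^1(\g_{-},\g)$.

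Next I would compute the effect on the curvature. From $\Om=d\om+\tfrac12[\om,\om]$ one obtains $\Om'=\Om+d\phi+[\om,\phi]+\tfrac12[\phi,\phi]$; translating into curvature functions and sorting by homogeneity gives, in degree $r$,
\[
c'_r=c_r+\partial A_r+Q_r,
\]
where $A_r$ is the degree-$r$ part of the function attached to $\phi$, the differential $\partial\colon C^1_r\to C^2_r$ supplies the linear leading term, and $Q_r$ collects the quadratic corrections, which depend only on the components $A_s,c_s$ with $s<r$. I would then normalize inductively: assuming $\partial^* c_s=0$ for all $s<r$, the decomposition $C^2_r=\im\partial\oplus\ker\partial^*$ lets me choose $A_r$ so that $\partial A_r$ cancels the $\im\partial$-component of $c_r+Q_r$, leaving $c_r\in\ker\partial^*$. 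The residual freedom in $A_r$ lies in $\ker\partial$ and corresponds to infinitesimal automorphisms of the structure, so the normalized connection is determined up to isomorphism, provided the condition $\partial^* c=0$ is $H$-invariant (which is what property~\eqref{eq:3} requires for the output to be a genuine Cartan connection); for the reductive part $\g_0$, acting by transpose-adjoint and preserving the grading, this invariance is immediate from the metric.

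The hardest part will be making the inductive step rigorous in two respects. First, one must verify that at each positive homogeneity the \emph{admissible} ($\gh$-valued) gauge transformations produce, via $\partial$, precisely the $\im\partial$-component that has to be removed, and simultaneously control the quadratic terms $Q_r$ so that fixing the gauge in degree $r$ never spoils the normalization already achieved in lower degrees; this closure of the induction is exactly where Morimoto's harmonic theory~\cite{mor89} is needed. Second, because $\g$ is not semisimple, the $H$-invariance of $\ker\partial^*$ does not follow from Kostant's theorem as it would in parabolic geometry, and must instead be checked by hand from the explicit metric, the only non-reductive contribution coming from the one-parameter unipotent subgroup $\exp(\g_1)$.
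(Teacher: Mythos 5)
The paper does not prove this proposition at all: it is imported verbatim from \cite{dkm,mor89}, so there is no in-text argument to compare yours against. Your sketch does follow the route those references take (inductive normalization by homogeneity using the orthogonal splitting $C^2_r(\g_-,\g)=\im\partial\oplus\ker\partial^*$), but as written it has a gap that would stop the induction from closing. You restrict the gauge freedom to a \emph{horizontal $\gh$-valued} $1$-form $\phi$, arguing that the soldering parts can first be matched by a bundle isomorphism. That is not available: conditions (a)--(d) fix the adapted coframe only up to a filtration-preserving change, i.e.\ up to positive-homogeneity modifications $\g_{-i}\to\g_{-i+r}$ landing in $\g_-$, and the group of such changes is much larger than $H$, so they cannot be absorbed into a principal $H$-bundle isomorphism. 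These $\g_-$-valued components of $C^1_r(\g_-,\g)$ are not a nuisance to be removed but an essential part of the argument: $\partial\bigl(\Hom(\g_-,\gh)_r\bigr)$ is in general a proper subspace of $\partial\bigl(C^1_r(\g_-,\g)\bigr)$ (for instance, the purely $\g_-$-valued ``torsion'' directions $\partial\Hom(\g_-,\g_-)_r$ are reached only by changing the coframe), so with $\gh$-valued $A_r$ alone you cannot cancel the full $\im\partial$-component of $c_r+Q_r$. The correct setup lets $\phi$ take values in all of $\g$ with homogeneity $\ge 1$, the $\g_-$-part being a change of adapted coframe.

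The uniqueness step is also asserted rather than proved. The residual freedom at homogeneity $r$ is $\ker(\partial\colon C^1_r\to C^2_r)$, and identifying it with ``infinitesimal automorphisms'' is exactly the statement $\ker\partial_r=\partial(\g_r)$, i.e.\ $H^1_r(\g_-,\g)=0$ for $r>0$. This happens to be true for the algebra $\g$ of this paper (one can check via the same Serre--Hochschild computation as in Section~3 that $E_2^{1,0}$ and $E_2^{0,1}$ are concentrated in non-positive degrees), but it is a nontrivial input, not a generality --- it is precisely the hypothesis that fails if $\g$ is replaced by a proper graded subalgebra --- and your proof must state and verify it. Your remaining caveats (that $\ker\partial^*$ must be checked to be $H$-invariant by hand because the metric is not $\Ad(H)$-invariant, and that the quadratic terms $Q_r$ must not disturb lower degrees) are correctly identified as the places where Morimoto's machinery is doing real work; flagging them is fine, but in their current form they are placeholders rather than arguments.
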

In the sequel we call this Cartan connection \emph{a normal Cartan connection associated with equation~\eqref{eq1}} and denote it by $\om_\E$.

\subsection{Fundamental invariants}
Normal Cartan connections give also an algorithm of constructing invariants of ordinary differential equations. Let $\om_{\E}\colon T\cG\to\g$ be the normal Cartan connection associated with the equation $\E$. Since $\om_\E$ defines an absolute parallelism on $\cG$, we see that the algebra of its invariants is generated by coefficients of its structure function and their covariant derivatives. Using the special properties of the canonical Cartan connection and in particular its deep relation with harmonic theory on the symbol algebra $\g$, we may reduce the number of generators of the algebra of invariants of $\om_\E$.

\begin{prop}[\cite{dkm}]
The algebra of invariants of the canonical Cartan connection $\om_\E$ is generated by the coefficients of the harmonic part of the structure function $c$ of $\om_\E$ and its covariant derivatives. In particular, the curvature function $c$ vanishes if and only if its harmonic part vanishes.
\end{prop}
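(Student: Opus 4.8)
The statement is the second cited proposition: the algebra of invariants of the normal Cartan connection $\om_\E$ is generated by the harmonic part of the structure function $c$ together with its covariant derivatives, and in particular $c\equiv 0$ iff its harmonic part vanishes. My plan is to exploit the two defining features of the normal connection established above: regularity (so $c$ takes values in $C^2_+(\g_-,\g)$) and normalization ($\partial^* c = 0$, so $c$ is $\partial^*$-closed). Because $\om_\E$ is an absolute parallelism, general Cartan theory already guarantees that \emph{all} coefficients of $c$ and their iterated covariant derivatives generate the invariant algebra; the content here is that the $\partial^*$-exact contributions are \emph{redundant}, i.e.\ recoverable by covariant differentiation from the harmonic remainder.

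First I would set up the Bianchi identity for the curvature $\Om = d\om + \tfrac12[\om,\om]$. Differentiating the structure equation yields an identity of the schematic form
\begin{equation*}
\partial c = (\text{covariant derivative of } c) + (\text{quadratic terms in } c),
\end{equation*}
where $\partial$ is the Lie-algebra cochain differential from the excerpt and the covariant derivative is taken with respect to $\om_\E$. This is the key structural tool: it expresses the $\partial$-image of $c$ in terms of $\nabla c$ and lower-order data. Combined with the Hodge-type decomposition
\begin{equation*}
C^2_+(\g_-,\g) = \im \partial \oplus \ker\Box \oplus \im\partial^*,
\end{equation*}
where $\Box = \partial\partial^* + \partial^*\partial$ is the Kostant Laplacian, and using that $\partial^* c = 0$ kills the $\im\partial^*$ summand, I would argue that $c$ lies in $\im\partial \oplus \ker\Box$. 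The harmonic part is exactly the $\ker\Box$ component, so what must be shown is that the $\im\partial$ component is not independent new information.

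The main step is then to recover the $\im\partial$ part of $c$ from the harmonic part by covariant differentiation. Concretely, I would isolate the component of $c$ in the lowest nonzero degree $r$; there $\partial c_r$ is controlled purely algebraically (the quadratic terms in the Bianchi identity involve strictly lower-degree pieces and hence vanish at the bottom), so $c_r$ is both $\partial^*$-closed and has $\partial c_r$ expressible through $\nabla$ of lower data, forcing $c_r$ harmonic. Proceeding by induction on the degree $r$, at each stage the Bianchi identity writes the non-harmonic ($\im\partial$) portion of $c_r$ as $\partial$ applied to something that is itself a covariant derivative of already-controlled lower-degree curvature components, hence as a polynomial in the harmonic coefficients and their covariant derivatives. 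This inductive bookkeeping — keeping track of how the quadratic curvature terms and the $\nabla c$ terms redistribute across degrees, and verifying that only harmonic data feeds forward — is where the real work lies, and I expect it to be the principal obstacle. The final assertion, that $c=0$ iff its harmonic part is zero, follows as the degenerate case of the same induction: if the harmonic part vanishes identically then at the bottom degree $c_r\in\im\partial\cap\ker\partial^*=0$, and the inductive step propagates $c_r=0$ upward through all degrees.
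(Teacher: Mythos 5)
The paper itself gives no proof of this proposition: it is quoted from~\cite{dkm} (the argument goes back to Tanaka and Morimoto), so there is no in-paper proof to compare yours against. Your overall strategy --- absolute parallelism to reduce to the structure function, the Bianchi identity, the Hodge decomposition of $C^2(\g_{-},\g)$, and induction on the homogeneity degree --- is indeed the standard route taken in the cited sources.

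There is, however, a concrete error in the Hodge-theoretic bookkeeping that derails your main step. In the decomposition $C^2=\im\partial\oplus\ker\Box\oplus\im\partial^*$ one has $\ker\partial^*=\ker\Box\oplus\im\partial^*$: elements of $\im\partial^*$ are automatically $\partial^*$-closed because $\partial^*\circ\partial^*=0$, while $\im\partial\cap\ker\partial^*=0$ (if $c=\partial a$ and $\partial^*c=0$ then $\langle c,c\rangle=\langle a,\partial^*c\rangle=0$). So the normality condition $\partial^*c=0$ kills the $\im\partial$ summand, not the $\im\partial^*$ summand as you assert; the curvature lies in $\ker\Box\oplus\im\partial^*$, and the component that must be recovered from the harmonic part is the $\im\partial^*$-component. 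With the roles reversed your inductive step cannot be carried out: you propose to exhibit the non-harmonic part of $c_r$ as ``$\partial$ applied to something'', but every such element is orthogonal to $\ker\partial^*$, where $c$ actually lives, so your candidate correction term is forced to vanish and the induction produces nothing. The correct mechanism is that $\Box$ is invertible on $\im\partial^*$, and since $\partial^*c=0$ one gets $c_r-c_r^{\mathrm{harm}}=\Box^{-1}\partial^*\bigl((\partial c)_r\bigr)$, where $(\partial c)_r$ is expressed by the Bianchi identity through covariant derivatives and quadratic combinations of the components $c_j$ with $j<r$, which are controlled by the induction hypothesis. Your treatment of the lowest nonzero degree (Bianchi forces $\partial c_r=0$ there, hence $c_r$ is harmonic) is correct, and the vanishing statement follows from it; but the general inductive step needs the $\partial$/$\partial^*$ roles restored as above.
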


As the harmonic part of the cochain complex $C(\g_{-},\g)$ is naturally isomorphic to the corresponding Lie algebra cohomology $H(\g_{-},\g)$, the number of fundamental invariants and their degrees can be determined by computing the cohomology spaces $H^2_{+}(\g_{-},\g)$. This will be done in the next section.

\section{Lie algebra cohomology related to systems of ODEs of higher order}
\label{sec:cohom}
Consider the cohomology spaces $H^q(\g_{-},\g)$. They can be naturally supplied with the grading:
\begin{gather*}
H^q(\g_{-},\g)=\bigoplus_{r\in\mathbb Z} H^q_r(\g_{-},\g),\\
\intertext{where}
H^q_r(\g_{-},\g)=\{[c]\in H^q(\g_{-},\g)\mid
c(\g_{i_1},\dots,\g_{i_q})\subset \g_{i_1+\dots+i_q+r}\}.
\end{gather*}

As mentioned above, the fundamental invariants of a system of the $(k+1)$-th order 
ODEs are described by the positive part of the second cohomology space $H^2(\g_{-},\g)$. Below
we compute this space by means of the Serre--Hochschild spectral sequence, determined by the subalgebra $V$ of $\g_{-}$. 

Recall~(see~\cite{hoch-serre, fuks}) that the Serre--Hochschild spectral sequence is one of the main technical tools for computing cohomology $H(\mathfrak{l}, A)$ of an arbitrary Lie algebra $\mathfrak{l}$ with coefficients in an $\mathfrak{l}$-module $A$ in case when the Lie algebra $\mathfrak{l}$ has a non-trivial ideal $\mathfrak{l}_0$. In this case the second term $E_2$ of this spectral sequence is equal to $E_2^{p,q}=H^p(\mathfrak{l}/\mathfrak{l_0}, H^q(\mathfrak{l_0}, A))$.  

In our case we can build the Serre-Hochschild spectral sequence taking $V$ as an ideal of $\g_{-}$. Then the second term $E_2$ of the Serre-Hochschild spectral sequence computing $H(\g_{-}, \g)$ has the form: $E_2=\bigoplus_{p,q}E_2^{p,q},$ where
\[
E^{p,q}_2=H^p(\R x, H^q(V,\g)),\quad p,q\ge0.
\]

We immediately get the following result regarding the structure of $H^2(\g_{-},\g)$:
\begin{prop}\label{l1}
 The second cohomology space $H^2(\g_{-},\g)$ is naturally isomorphic with the subspace $E_2^{1,1}\oplus E_2^{0,2}$ of the  Serre-Hochschild spectral sequence determined by the ideal $V\subset
  \g_{-}$. 

  Moreover, we have
  \begin{align*}
    E_2^{1,1}&=H^1(\R x,H^1(V,\g)),\\
    E_2^{0,2}&=H^0(\R x,H^2(V,\g))=\Inv_x H^2(V,\g).
  \end{align*}
\end{prop}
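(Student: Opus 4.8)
The plan is to run the Serre--Hochschild spectral sequence attached to the abelian ideal $V\triangleleft\g_{-}$ with coefficients in the adjoint $\g_{-}$-module $\g$, exploiting throughout that the quotient $\g_{-}/V\cong\R x$ is one-dimensional. The exterior algebra of $\g_{-}=\R x\op V$ decomposes as $\we^\bullet\g_{-}=\we^\bullet(\R x)\ot\we^\bullet V$, and since $\we^2(\R x)=0$ the resulting filtration of the cochain complex $C^\bullet(\g_{-},\g)$ by the number of $\R x$-arguments has only two nonzero columns. This is exactly the filtration whose associated spectral sequence has
\[
E_2^{p,q}=H^p(\g_{-}/V,H^q(V,\g))=H^p(\R x,H^q(V,\g)),\qquad p,q\ge0.
\]
First I would record that, because $\R x$ is one-dimensional, $H^p(\R x,M)=0$ for every $p\ge2$ and every $\R x$-module $M$; hence $E_2^{p,q}=0$ unless $p\in\{0,1\}$.

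With the spectral sequence concentrated in the two columns $p=0,1$, all higher differentials vanish automatically: $d_r$ shifts $p$ by $r\ge2$, so its target lies in a column $\ge2$ and is zero, and any incoming $d_r$ would originate in a column $\le-1$; thus $E_2=E_\infty$. For total degree $2$ the only a priori contributions are $E_\infty^{0,2}$, $E_\infty^{1,1}$ and $E_\infty^{2,0}$, and the last vanishes since $E_2^{2,0}=H^2(\R x,H^0(V,\g))=0$ by the same dimension count. I would then identify the two surviving terms using the elementary cohomology of the one-dimensional Lie algebra $\R x$: for any $\R x$-module $M$ one has $H^0(\R x,M)=\ker(x\colon M\to M)=\Inv_x M$ and $H^1(\R x,M)=M/xM$. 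Taking $M=H^2(V,\g)$ gives $E_2^{0,2}=\Inv_x H^2(V,\g)$, and $M=H^1(V,\g)$ gives $E_2^{1,1}=H^1(\R x,H^1(V,\g))$, which are precisely the formulas claimed.

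It remains to assemble these into the decomposition of $H^2(\g_{-},\g)$. Convergence endows $H^2(\g_{-},\g)$ with a natural two-step filtration whose graded pieces are $E_\infty^{1,1}=E_2^{1,1}$ and $E_\infty^{0,2}=E_2^{0,2}$; equivalently there is a natural short exact sequence $0\to E_2^{1,1}\to H^2(\g_{-},\g)\to E_2^{0,2}\to0$, in which the subspace $E_2^{1,1}$ consists of the classes representable by cochains in the $\Hom(\R x\ot V,\g)$-column and the quotient is the $\Hom(\we^2 V,\g)$-column, matching the decomposition $\we^2\g_{-}\cong\R x\ot V\op\we^2 V$ underlying Subsection~\ref{ss:acc}. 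Since all spaces are real vector spaces this sequence splits, yielding $H^2(\g_{-},\g)\cong E_2^{1,1}\op E_2^{0,2}$. The one point I would treat with care is exactly this last step: identifying the filtration on cohomology with the column decomposition, so that the two summands are recognized concretely as the $\Hom(\R x\ot V,\g)$- and $\Hom(\we^2 V,\g)$-parts of the curvature. Everything else is formal and follows from $\dim(\g_{-}/V)=1$; the only substantive computation, deferred to the explicit evaluation of $E_2^{1,1}$ and $E_2^{0,2}$, is that of $H^1(V,\g)$ and $H^2(V,\g)$ together with their $\R x$-module structure.
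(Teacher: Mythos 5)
Your argument is correct and follows essentially the same route as the paper: both proofs observe that $\R x$ is one-dimensional, so $E_2^{p,q}=H^p(\R x,H^q(V,\g))$ vanishes for $p\ge 2$, the higher differentials are forced to vanish, and the spectral sequence degenerates at $E_2$, leaving $E_2^{1,1}\oplus E_2^{0,2}$ in total degree $2$. Your additional remarks on the identification $H^0(\R x,M)=\Inv_x M$, $H^1(\R x,M)=M/xM$ and on the splitting of the resulting filtration only make explicit what the paper leaves implicit.
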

\begin{proof}
Since the Lie algebra $\R x$ is one-dimensional, we see that
$E_2^{p,q}=\{0\}$ for all $p>1$. Therefore, the differential 
\[
d_2^{p,q}\colon E_2^{p,q}\to E_2^{p+2,q-1}
\]
is trivial and the spectral sequence stabilizes in the second term.
\end{proof}

Using the following observation, the computation of $H^2(\g_{-},\g)$ can be reduced essentially to the decomposition of $\sltw$-modules $H^1(V,\g)$ and $H^2(V,\g)$ into sums of irreducible submodules. 
\begin{lem}
  Let $V_q$ be a $(q+1)$-dimensional irreducible $\sltw$-module. The space $H^p(\R x,V_q)$ is trivial for $p\ge2$ and is  one-dimensional for $p=0,1$. 

  Let $v_0$ and $v_q$ be the highest and the
  lowest weight vectors of $V_q$ (that is $h.v_0=qv_0$ and
  $h.v_q=-qv_p$). Then $H^0(\R x,V_q)$ is generated by $v_0$, and
  $H^1(\R x,V_p)$ is generated by $[\alpha\colon x\to v_p]$.
\end{lem}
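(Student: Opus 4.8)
The plan is to use that $\R x$ is one-dimensional, so that the Chevalley--Eilenberg complex computing $H^\bullet(\R x, V_q)$ lives only in degrees $0$ and $1$. First I would write this complex out explicitly: since $\wedge^p(\R x)^\ast = 0$ for $p \ge 2$, we have $C^0 = V_q$, $C^1 = \Hom(\R x, V_q)$ and $C^p = 0$ for $p \ge 2$, which already gives $H^p(\R x, V_q) = 0$ for all $p \ge 2$. Evaluating a $1$-cochain at the generator $x$ identifies $C^1 \cong V_q$ canonically, and under this identification the cochain differential $\partial\colon C^0 \to C^1$ is precisely the operator $\rho(x)\colon V_q \to V_q$ giving the action of $x$. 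Hence $H^0(\R x, V_q) = \ker\rho(x)$, the space of $\R x$-invariants, while $H^1(\R x, V_q) = \operatorname{coker}\rho(x) = V_q/\im\rho(x)$.

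The remaining work is to describe $\rho(x)$ via the weight decomposition of the irreducible module $V_q \cong S^q(\R^2)$. In the monomial basis $v^i = f_2^{q-i}f_1^i/i!$ the element $v^0$ is the highest weight vector $v_0$ and $v^q$ is the lowest weight vector $v_q$. Using $x\cdot f_1 = f_2$, $x\cdot f_2 = 0$ together with the Leibniz rule, I would verify the clean formula $x\cdot v^i = v^{i-1}$ for $1\le i\le q$ and $x\cdot v^0 = 0$; that is, $\rho(x)$ is a single nilpotent Jordan block of size $q+1$ which raises weights and annihilates $v_0$. Both cohomology spaces then read off immediately: $\ker\rho(x) = \R v_0$, so $H^0$ is one-dimensional and generated by $v_0$; and $\im\rho(x) = \langle v^0,\dots,v^{q-1}\rangle$ has codimension one, with $v^q = v_q$ descending to a generator of the quotient, so $H^1$ is one-dimensional and generated by the class of the cochain $\alpha\colon x\mapsto v_q$.

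Since everything after reducing to a two-term complex is a direct computation in $\sltw$-representation theory, I do not expect a genuine obstacle here; the statement is essentially a bookkeeping exercise. The only point demanding care --- and the sole place where an error could creep in --- is the sign convention: the paper takes $h = \operatorname{diag}(-1,1)$, so that $x$ acts as a raising rather than a lowering operator. This choice is exactly what forces the invariants $H^0$ to sit on the highest weight vector while the surviving class in $H^1$ is carried by the lowest weight vector, and getting it backwards would erroneously swap $v_0$ and $v_q$ in the conclusion.
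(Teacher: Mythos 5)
Your proof is correct and is exactly the argument the paper has in mind: the paper's own proof is the one-line remark that the lemma ``immediately follows from the explicit description of the structure of irreducible $\sltw$-modules,'' and your write-up (two-term Chevalley--Eilenberg complex, $H^0=\ker\rho(x)$, $H^1=\operatorname{coker}\rho(x)$, with $x.v^i=v^{i-1}$ making $\rho(x)$ a single nilpotent Jordan block) supplies precisely those details, including the correct handling of the sign convention $h=\operatorname{diag}(-1,1)$ that makes $x$ a raising operator.
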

\begin{proof}
  Immediately follows from the explicit description of the structure of irreducible $\sltw$-modules.
\end{proof}

Let us identify $\ga$ with the subalgebra of $\gl(V)$ corresponding to the action of
$\sltw\times \glm$ on $V$. Then the cohomology spaces $H^q(V,\g)$ can be described via 
the classical Spencer cohomology spaces determined by the
subalgebra $\ga\subset \gl(V)$. Recall that the Spencer operator $\Sop^q$ is defined as:
\begin{gather*}
\Sop^q\colon \Hom(\wedge^q V,\ga)\to \Hom(\wedge^{q+1} V,V),\\
\Sop^q(\phi)(v_1\wedge v_2\wedge \dots \wedge v_{q+1})=
\sum_{i=1}^{q+1}(-1)^i\phi(v_1\wedge \dots \wedge \hat{v_i}\wedge
\dots \wedge v_{q+1})v_i.
\end{gather*}

\begin{lem}\label{l2} We have $H^0(V,\g)=V$ and
\[
H^q(V,\g)=\ker \Sop^q\oplus \Hom(\wedge^q V, V)/\im \Sop^{q-1}
\]
for all $q\ge1$. 
\end{lem} 
\begin{proof}
  Indeed, let us represent an arbitrary cocycle $c\in C^q(V,\g)$ as
  $c=c_{\ga}+c_V$, where $c_{\ga}\in \Hom(\wedge^q V,\ga)$ and $c_V\in
  \Hom(\wedge^q V,V)$.  Since $V$ is commutative Lie algebra, we have 
  \[
  (\partial c)=\Sop^q(c_{\ga})\in \Hom(\wedge^{q+1}V,V).
  \]
  This immediately implies the statement of the lemma. 
\end{proof}

For $q=1,2$ the mappings $\Sop^q$ can be described explicitly.
\begin{lem}\ \par  
  The operator $\Sop^1$ is injective if $m\ge 2$ and $k\ge 3$ .
\end{lem}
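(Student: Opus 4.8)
The plan is to reinterpret $\ker\Sop^1$ as the first prolongation $\ga^{(1)}$ of the linear Lie algebra $\ga\subset\gl(V)$ and to prove that it vanishes. Indeed, $\phi\in\Hom(V,\ga)$ lies in $\ker\Sop^1$ precisely when
\[
\phi(v_1)v_2=\phi(v_2)v_1\qquad\text{for all } v_1,v_2\in V.
\]
Writing $\phi=\alpha\op\beta$ with $\alpha\colon V\to\gsl(2,\R)$ and $\beta\colon V\to\gl(m,\R)$, and recalling that $\gsl(2,\R)$ acts on $V=V_k\ot W$ through the first factor while $\gl(m,\R)$ acts through the second, this relation reads, for $u,u'\in V_k$ and $w,w'\in W$,
\[
(\alpha(u\ot w)u')\ot w'+u'\ot(\beta(u\ot w)w')=(\alpha(u'\ot w')u)\ot w+u\ot(\beta(u'\ot w')w).
\]
I would fix a basis $e_1,\dots,e_m$ of $W$, substitute $w=e_a$, $w'=e_b$, and project onto the summands $V_k\ot e_c$, obtaining a family of vector identities in $V_k$ indexed by $a,b,c$.

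The crux is to show that $\alpha\equiv0$. Taking two distinct indices $a\ne b$ (this is where the hypothesis $m\ge2$ enters) and projecting onto $V_k\ot e_b$, one finds that $\alpha(u\ot e_a)u'$ is a linear combination of $u$ and $u'$ for every $u'\in V_k$; equivalently, the operator $\alpha(u\ot e_a)\in\gsl(2,\R)$ acting on $V_k$ differs from a scalar operator by one of rank at most $1$. The main obstacle is exactly this step, and it is resolved by the structure of the irreducible $\sltw$-module $V_k$: a nonzero element of $\gsl(2,\R)$ is either semisimple, acting on $V_k$ with $k+1$ distinct eigenvalues, or regular nilpotent, acting as a single Jordan block of size $k+1$. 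In either case $\alpha(u\ot e_a)-c\,\Id$ has rank at least $k$ for every scalar $c$, so rank $\le1$ is impossible once $k\ge2$ (in particular for $k\ge3$). Hence $\alpha(u\ot e_a)=0$ for all $u,a$, that is $\alpha\equiv0$.

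With $\alpha=0$ the remaining identities become purely linear in $\beta$. Projecting onto the various $V_k\ot e_c$ and repeatedly using that $V_k$ contains linearly independent vectors (so that an identity $\lambda u'=\mu u$ forces $\lambda=\mu=0$), I would show that each matrix $\beta(u\ot e_a)\in\gl(m,\R)$ has every entry outside the $(a,a)$ slot equal to zero, and then, from the projection with $a=b=c$, that the $(a,a)$ entry vanishes as well. Thus $\beta\equiv0$ and $\Sop^1$ is injective. The only bookkeeping subtlety is the case $m=2$, where there is no index distinct from both $a$ and $b$; there the argument still closes, since each off-diagonal entry and each wrong-diagonal entry of $\beta(u\ot e_a)$ is already annihilated by one of the projections onto $V_k\ot e_a$ or $V_k\ot e_b$.

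Conceptually this is the statement that the finite-type linear Lie algebra $\ga\subset\gl(V)$ has trivial first prolongation, and one could alternatively deduce it from the classification of irreducibly acting linear Lie algebras with non-vanishing prolongation; but the direct projection argument above is self-contained and keeps the role of the hypotheses $m\ge2$ and $k\ge3$ transparent.
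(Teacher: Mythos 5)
Your proof is correct, but it takes a genuinely different route from the paper's. The paper also begins by identifying $\ker\Sop^1$ with the first prolongation $\ga^{(1)}$, but then disposes of it in one stroke by quoting Kobayashi--Nagano: if $\ga^{(1)}\ne\{0\}$, then $V+\ga+\sum_{i\ge1}\ga^{(i)}$ is an irreducible graded Lie algebra to which their Lemma~7.3 applies, forcing the difference of the highest and lowest weights of the $\ga$-module $V=V_k\ot W$ to equal the sum of the highest roots of $\sltw$ and $\glm$ --- impossible for $k\ge3$, $m\ge2$. You instead carry out the prolongation computation by hand: splitting $\phi=\alpha\op\beta$ along $\ga=\gsl(2,\R)\op\gl(m,\R)$, using a second index $b\ne a$ to show that $\alpha(u\ot e_a)$ differs from a scalar by an operator of rank at most one, and killing it via the observation that a nonzero element of $\gsl(2,\R)$ acts on the irreducible module $V_k$ either diagonalizably with $k+1$ distinct eigenvalues (possibly complex, which does not affect the rank count) or as a single Jordan block, so that $\rho_k(X)-c\Id$ always has rank at least $k$. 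The remaining linear analysis of $\beta$ then closes quickly (in fact, once $\alpha=0$, choosing any two linearly independent $u,u'\in V_k$ forces both sides of the symmetry relation to vanish for all $w,w'$, so your entry-by-entry bookkeeping, while correct, can be shortened). The paper's argument is shorter but leans on an external classification-type result; yours is self-contained and elementary, makes transparent exactly where $m\ge2$ enters (existence of $b\ne a$) and where the order enters (the rank bound, which in fact only needs $k\ge2$), and would be the natural choice if one wanted the lemma without importing the Kobayashi--Nagano machinery.
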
  
\begin{proof}
  Let us note that $\ker \Sop^1$ is precisely the first prolongation
  $\ga^{(1)}$ of the subalgebra $\ga\subset \gl(V)$. Suppose that
  $\ga^{(1)}\ne\{0\}$.  Then the algebra $V+\ga+\sum_{i=1}^\infty
  \ga^{(i)}$ is an irreducible graded Lie algebra of depth $\ge 2$
  (see~\cite{kn65}). Then from~\cite[Lemma 7.3]{kn65} it follows
  that the difference between the highest and the lowest weights of
  $\ga$-module $V$ is equal to the sum of the highest roots of $\sltw$ and $\glm$. 
This is not possible under the assumptions of the lemma. Therefore, $\ker \Sop^1=\ga^{(1)}=\{0\}$.
\end{proof}

\begin{lem}
The operator $\Sop^2$ is injective for $m\ge3,k\ge3$ and $m=2,k\ge4$.
\end{lem}
\begin{proof}
First we prove that $\ker \Sop^2=0$ for $m\ge3,k\ge3$.

 Let $\alpha$ be an arbitrary element of $\ker \Sop^2$. Put
 \[
\alpha_{ij}(w_1,w_2)=\alpha(v^i\ot w_1,v^j\ot w_2)\in \ga. 
\] 
Let us show that $\alpha_{ij}=0$ for all $i,j\ge 2$. Indeed, we have 
  \[
  \alpha_{ij}(w_1,w_2)v^0\ot w_3-\alpha_{0j}(w_3,w_2)v^i\ot w_1+\alpha_{0i}(w_3,w_1)v^j\ot w_2=0.
  \]
  But for any element $X\in \ga$
  \[Xv^i\ot w\in
  \langle v^{i-1}\ot w,v^{i}\ot W,v^{i+1}\ot w\rangle. \]
    Hence,
  $\alpha_{ij}(w_1,w_2)v^0\ot w_3=0$ for every vector $w_3$ which is not lying in the linear span of $w_1$ and $w_2$. Therefore 
  \begin{equation}\label{eq2} \alpha_{ij}(w_1,w_2)\in \langle x, h,z \rangle,
  \end{equation} where $z$ lies in the center of~$\glm$.
  Similarly, 
   \[
  \alpha_{ij}(w_1,w_2)v^1\ot w_3-\alpha_{1 j}(w_3,w_2)v^i\ot w_1+\alpha_{1 i}(w_3,w_1)v^j\ot w_2=0.\]
   But from~\eqref{eq2} we see that 
  \[\alpha_{ij}(w_1,w_2)v^1\ot w_3\in (\R v^0\op \R v^1)\ot w_3 .\]
  Therefore, $\alpha_{ij}(w_1,w_2)v^1\ot w_3=0$ for every vector $w_3$ which is not lying in the linear span of $w_1$ and $w_2$. This is possible only if $\alpha_{ij}=0$.  In the same way we can prove that
  $\alpha_{ij}=0$ for all $i,j\le k-2$.
  
 Consider now the following subspace $Q\subset \wedge^2 V$:
  \[
  Q=\{w\in \wedge^2 V\mid \alpha(w)=0,\quad\alpha\in \ker \Sop^2\}.
  \]
  It is clear that $Q$ is a submodule of the $\sltw\times\glm$-module
  $\wedge^2 V$. As we have just proved, $(v^i\ot W)\wedge (v^j\ot W)\subset Q$
  for all pairs of $i$ and $j$ such that $i,j\ge 2$ or $i,j\le k-2$. Hence, $Q$ contains also the submodule generated by these elements. 
  
  We claim that these elements generate whole $\we^2V$. It is sufficient to prove that if $(v^i\ot W)\wedge (v^j\ot W)\subset Q$ for all pairs of $i$ and $j$ such that $i,j\ge l+1$ and $i,j\le l$ then $(v^i\ot W)\wedge (v^j\ot W)\subset Q$ for all pairs of $i$ and $j$ such that $i,j\ge l-1$ and $i,j\le l+1$. First,  consider an element $ v^{l+1} \ot m_1 \we v^{l+j}\ot m_2$ where $j\ge2$. Then after the action of $x$ on this element we get that $ v^{l} \ot m_1 \we v^{l+j}\ot m_2\in Q.$ Similarly, using the action of the element $y$ on $ v^{l} \ot m_1 \we v^{l-j+1}\ot m_2$ we get $ v^{l+1} \ot m_1 \we v^{l-j+1}\ot m_2\in Q$. 
  
  The only one type  of elements for which we don't know yet if they belong to $Q$ are elements of the form $ v^{l} \ot w_1 \we v^{l+1}\ot w_2$. If we act by the element $x$ on $ v^{l} \ot w_1 \we v^{l+2}\ot w_2$ we obtain
  \[ v^{l-1} \ot w_1 \we v^{l+2}\ot w_2 +
  v^{l} \ot w_1 \we v^{l+1}\ot w_2 \in Q.
  \]
 On the other hand, after the action of the element $y$ on  $ v^{l-1} \ot m_1 \we v^{l+1}\ot w_2$ we get
  \[ (k-i-1) v^{l-1} \ot m_1 \we v^{l+2}\ot w_2 +
  (k-i+1) v^{l} \ot w_1 \we v^{l+1}\ot w_2 \in Q.
  \]
Therefore elements  $v^{l-1} \ot w_1 \we v^{l+2}\ot w_2$ and $v^{l} \ot w_1 \we v^{l-1}\ot w_2$ belong to $ Q$.
  
  Now, consider the case $m=2$.  Using the same reasoning with the maps $\alpha_{ij}$ one can show that $\alpha_{ij}=0$ if $i,j\ge3$ or $i,j\le k-3$. Therefore if $k\ge6$ the map $\Sop^2$ is injective. Similar computation shows that $\ker \Sop^2=0$ also for $k=5$ and $k=4$.
\end{proof}

\begin{lem}\label{lem5}
If $m=2,k=3$ then a kernel of the operator $\Sop^2$ is a 1-dimensional space.
\end{lem}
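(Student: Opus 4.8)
The plan is to continue the coordinate computation of the previous lemma, now specialised to $m=2$, $k=3$, and to combine it with a representation-theoretic identification of the surviving solution. Throughout I work with the bilinear maps $\alpha_{ij}(w_1,w_2)=\alpha(v^i\ot w_1,v^j\ot w_2)\in\ga$ attached to a cocycle $\alpha\in\ker\Sop^2$, and I keep in mind that $\ker\Sop^2$ is an $\ga=\sltw\times\gl(2,\R)$-submodule of $\Hom(\wedge^2V,\ga)$. Since the claim is that this submodule is one-dimensional, its generator must carry the trivial $\sltw$-action and a one-dimensional $\gl(2,\R)$-action, i.e.\ it must be $\sltw$-equivariant and transform by a power of $\det$; this is the structural fact I will use to locate it. The argument then splits into an upper bound $\dim\ker\Sop^2\le1$ and the exhibition of one nonzero cocycle.

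For the upper bound I would first transplant what the proof of the previous lemma already gives for $m=2$: it still forces $\alpha_{ij}=0$ whenever $i,j\ge3$ or $i,j\le0$, so here $\alpha_{33}=\alpha_{00}=0$. The essential point is to isolate \emph{why} injectivity fails exactly at $(m,k)=(2,3)$: in the previous proof one concluded $\alpha_{ij}(w_1,w_2)v^0\ot w_3=0$ by choosing $w_3$ linearly independent of $w_1,w_2$, but for $m=2$ any three vectors of $\R^2$ are dependent, so this step is vacuous once $w_1,w_2$ already span $\R^2$. I would replace it by the exact two-dimensional identity $\om_W(w_2,w_3)\,w_1-\om_W(w_1,w_3)\,w_2+\om_W(w_1,w_2)\,w_3=0$, where $\om_W$ is the area form on $\R^2$, and substitute it into the cocycle relations $\alpha(v_2,v_3)v_1-\alpha(v_1,v_3)v_2+\alpha(v_1,v_2)v_3=0$. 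Running through the finitely many remaining components $\alpha_{ij}$, $0\le i\le j\le3$, and the few weights involved then collapses the solution space to at most one free scalar.

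To see that this one dimension is actually attained and to name the generator, I would decompose the space $\Hom_{\sltw}(\wedge^2V,\ga)$ of $\sltw$-equivariant maps as a $\gl(2,\R)$-module. Using $\wedge^2V=(\wedge^2V_3\ot S^2W)\op(S^2V_3\ot\wedge^2W)$ with $\wedge^2V_3=S^4\op S^0$ and $S^2V_3=S^6\op S^2$, one finds $\Hom_{\sltw}(\wedge^2V,\ga)\cong \det{}^{-1}\op\bigl((S^2W)^*\ot\gl(2,\R)\bigr)$, whose only one-dimensional subrepresentations are two copies of $\det^{-1}$. The sought cocycle is the unique (up to scale) combination $\alpha=\alpha_A+c\,\alpha_B$ of these two lines, where $\alpha_A(s\ot w,s'\ot w')=\om_W(w,w')\,\mu(s,s')$ with $\mu\colon S^2V_3\to\sltw$ the polarisation of the $\sltw$-moment map of the symplectic module $V_3=S^3(\R^2)$, and $\alpha_B(s\ot w,s'\ot w')=\Om_{V_3}(s,s')\,\beta(w,w')$ with $\Om_{V_3}$ the invariant symplectic form on $V_3$ and $\beta\colon S^2W\to\gsl(W)\subset\gl(2,\R)$ the canonical quadratic map. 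Neither $\alpha_A$ nor $\alpha_B$ is a cocycle on its own, so the content is to determine the constant $c$ for which the cross terms cancel.

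I expect the main obstacle to be twofold: organising the degenerate $m=2$ relations cleanly enough to certify the upper bound $\dim\le1$ (the clean ``independent $w_3$'' argument being unavailable), and pinning down the precise constant $c$ making $\alpha_A+c\,\alpha_B$ annihilated by $\Sop^2$. The latter reduces to a moment-map/Jacobi type identity relating $\mu$ and $\Om_{V_3}$ on $V_3=S^3(\R^2)$, and once that single finite verification is done the two bounds together give $\dim\ker\Sop^2=1$.
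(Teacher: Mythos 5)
Your reduction of the problem is sound: $\ker\Sop^2$ is an $\ga$-submodule of $\Hom(\we^2V,\ga)$, a one-dimensional submodule must be $\sltw$-trivial and a $\gl(2,\R)$-character, and your decomposition $\Hom_{\sltw}(\we^2V,\ga)\cong{\det}^{-1}\oplus\bigl((S^2W)^*\ot\gl(2,\R)\bigr)$ with exactly two lines of type ${\det}^{-1}$ is correct (your $\alpha_A$, supported on $S^2V_3\ot\we^2W$ via the projection $S^2V_3\to S^2\cong\sltw$, and $\alpha_B$, supported on the trivial summand of $\we^2V_3$ tensored with the canonical map $S^2W\to\gsl(W)$). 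Your sketch of the upper bound is in the same spirit as the paper, which is equally terse there (``similar to above'').

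The genuine gap is the existence half. You reduce it to ``determine the constant $c$ for which $\alpha_A+c\,\alpha_B$ is annihilated by $\Sop^2$,'' but you never establish that such a $c$ exists, and nothing in your argument rules out that $\Sop^2$ is injective on this two-dimensional span --- in which case the kernel would be zero and the lemma false. Since the entire content of the lemma, as opposed to the injectivity statements preceding it, is precisely the existence of this nonzero cocycle, the proof is incomplete at its decisive point. The paper closes exactly this gap with no computation of $c$: it identifies $V=V_3\ot W$ with the complement of $\sltw\times\gsl(2,\R)$ inside the split real form of $G_2$ (matching $v^j\ot e_i$ with the root spaces $G_2(\gamma_{ij})$), notes that $[V,V]\subset\sltw\times\gsl(2,\R)$ for weight reasons, and takes $\Phi$ to be the restriction of the $G_2$ bracket to $\we^2V$; the Jacobi identity for three elements of $V$ is then literally the statement $\Sop^2(\Phi)=0$. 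In your notation this $G_2$ bracket \emph{is} the sought combination $\alpha_A+c\,\alpha_B$. If you prefer to stay inside your framework, you could instead finish without computing $c$ by showing that the $\sltw$-invariant, ${\det}^{-1}$-isotypic component of the target $\Hom(\we^3V,V)$ is at most one-dimensional, forcing the restriction of $\Sop^2$ to your two-dimensional space to have a kernel; but some such argument must be supplied before the lemma is proved.
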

\begin{proof}
Similar to above, we can show that $\ker\Sop^2$ is at most 1-dimensional. To complete the proof we need to find a non-zero element in $\ker\Sop^2$. This can be done by using the split real form of the exceptional Lie algebra $G_2$. 
Namely, mark the root system of $G_2$ as follows:
\begin{center}
\begin{tikzpicture}[>=latex]

\draw[->] (0,0) -- ({sqrt(3)*cos(150)},{sqrt(3)* sin(150)}) node[above] {\(\gamma_{10}\)};

\draw[->] (0,0) -- ({cos(120)},{ sin(120)}) node[above] {\(\gamma_{11}\)};

\draw[->] (0,0) -- ({cos(60)},{ sin(60)}) node[above] {\(\gamma_{12}\)};

\draw[->] (0,0) -- ({sqrt(3)*cos(30)},{sqrt(3)* sin(30)}) node[above] {\(\gamma_{13}\)};

\draw[->] (0,0) -- ({sqrt(3)*cos(-150)},{sqrt(3)* sin(-150)}) node[below] {\(\gamma_{20}\)};

\draw[->] (0,0) -- ({cos(-120)},{ sin(-120)}) node[below] {\(\gamma_{21}\)};

\draw[->] (0,0) -- ({cos(-60)},{ sin(-60)}) node[below] {\(\gamma_{22}\)};

\draw[->] (0,0) -- ({sqrt(3)*cos(-30)},{sqrt(3)* sin(-30)}) node[below] {\(\gamma_{23}\)};

\draw[->] (0,0) -- ({0},{sqrt(3)}) node[above] {\(\alpha\)};

\draw[->] (0,0) -- ({0},{-sqrt(3)}) node[below] {\(-\alpha\)};

\draw[->] (0,0) -- ({-sqrt(3)/2},0) node[left] {\(\beta\)};
\draw[->] (0,0) -- ({sqrt(3)/2},0) node[right] {\(-\beta\)};

\end{tikzpicture}
\end{center} 
Associate an element $v^j\ot e_i$, $i=1,2$, $j=0,\dots,3$, with a basis element of the root space $G_2(\gamma_{ij})$, elements $x,y\in \sltw$ with basis elements of $G_2(\beta),G_2(-\beta)$ and elements $e^2_1,e^1_2\in\gl(2,\R)$ with basis elements of $G_2(\alpha),G_2(-\alpha)$. This correspondence can be prolonged to elements $h$ and $e_1^1-e_2^2$ by the formula:
\[ 
h=[x,y]\in [G_2(\beta),G_2(-\beta)],\quad  e_1^1-e_2^2=[e^1_2,e_1^2]\in [G_2(\alpha),G_2(-\alpha)].
\] 
Then a non-zero element $\Phi\in\ker\Sop^2$ corresponds to the restriction of the Lie bracket of $G_2$ to the space $\sum_{i=1}^2\sum_{j=0}^3 G_2(\gamma_{ij})$. The fact that the map $\Phi$ lies in the kernel of $\Sop^2$ follows immediately from Jacobi identities. 
\end{proof}
Note that in this case $\ker \Sop^2$ is a trivial $\sltw$-module, and, hence, all its elements are automatically $x$-invariant.

\subsection{Effective part of the space $E^{0,2}_2$}\label{ss:e02}

The previous subsection gives the full description of the subspace $E^{0,2}_2\subset H^2(\g_{-},\g)$. Namely, we have
\[
E^{0,2}_2 = \Inv_x \left( \frac{\Hom(\wedge^2 V, V)}{\im \Sop^1}\right) \oplus \ker \Sop^2,
\]
where the second summand is non-trivial only for $m=2,k=3$. 

However, not all elements of this subspace correspond to non-vanishing fundamental invariants of the normal Cartan connection. First of all, we need to consider only the elements of positive degree, as regularity condition of the normal Cartan connection implies that the structure function is concentrated in positive degree. However, this is not sufficient 
in order to guarantee that the corresponding invariants are non-trivial. For example, in the next section we show that the fundamental invariant corresponding to $\ker \Sop^2$ (for a system of two 4-th order ODEs) always vanishes. 

On top of it, computing a part of the normal Cartan connection explicitly, we find additional linear conditions on the structure function. Below we compute the space of all elements of $\Inv_x (\Hom(\wedge^2 V, V)/\im \Sop^1)$, which satisfy these additional conditions.

Let, as above, $\gh=\sum_{i\ge 0}\g_i$ be a non-negative part of $\g$. Let $F$ be the following $\gh$-invariant subspace of $V$:
\[
F = \langle v^1, \dots, v^k\rangle \otimes W. 
\]

Define an operator
\begin{equation}\label{l_delta}
\delta\colon \Hom(\we^p F,\R x)\to \Hom(\we^{p+1}F,V/F)=\Hom(\we^{p+1}F,W)
\end{equation}
as
\[
(\delta \om)(A_1,\dots,A_{p+1})=\left(\sum_{i=1}^{p+1}{(-1)}^i\om(A_1,\dots,\hat A_i,\dots,A_{p+1}).A_i \right)\mod F.
\]
This operator plays a crucial role in identifying an effective part of $E^{0,2}_2$.

Identify $V/F$ with $W$ and define the $\gh$-invariant morphisms $i_F \colon F \to V$, $i_{\we^2 F} \colon \we^2 F \to \we^2 V$, $\pi_W \colon V\to V/F$. Further, define  
\[
\al\colon\Hom (\we^2 V,V )\to \Hom (\we^2 F, V/F)
\] 
by composition of $i^*_{\we^2 F}$ with $\pi_W$. The map $\al$ sends every morphism $c\in\Hom(\we^2 V,V)$ to a morphism
\[ 
\al (c)=c|_{\we^2 F}  \mod F.
\]
Since $\al$ is $\gh$-invariant, so is $\ker \al$.

In the next section we explicitly compute coordinate representation of a part of the normal Cartan connection and, in particular, prove the following result.
\begin{prop}\label{prop1}
Let $\om$ be a normal Cartan connection associated with a system of $m\ge 2$ ODEs of order $k+1\ge 4$ and let $c$ be a part of its structure function taking values in $\Hom(\wedge^2 V, V)$. Then $\al(c)$ lies in $\im \delta$.
\end{prop}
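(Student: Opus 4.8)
The plan is to prove Proposition~\ref{prop1} by realizing the normal Cartan connection on $\E\cong J^k$ through an explicit adapted regular frame and then reading off the bottom piece of its curvature. Recall that $v^r\ot e_i$ has degree $-(k+1-r)$, that $F=\langle v^1,\dots,v^k\rangle\ot W$, and that $V/F\cong v^0\ot W=\g_{-(k+1)}$ is the deepest graded component. First I would fix the frame dual to $\om$: take $\xi_x=\frac{d}{dx}$ for the $\R x$-direction and $\xi^i_k=\frac{\p}{\p y^i_k}$ for $v^k\ot e_i$, and define the remaining $\xi^i_r$ (dual to $v^r\ot e_i$) by the downward recursion $\xi^i_{r-1}=-[\xi_x,\xi^i_r]+(\text{corrections})$, with the corrections fixed by the adaptation conditions (a)--(d) and the normalization $\p^*c=0$. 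The elementary bracket $[\xi_x,\frac{\p}{\p y^i_r}]=-\frac{\p}{\p y^i_{r-1}}-\frac{\p f^j}{\p y^i_r}\frac{\p}{\p y^j_k}$ makes transparent that the derivatives of $f$ are exactly what deform this frame away from the flat one and hence feed the curvature.

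Next I would exploit that $V$ is abelian, so that for $A_1,A_2\in V$ the curvature function collapses to $c(A_1,A_2)=-\om([\om^{-1}A_1,\om^{-1}A_2])$. Evaluating the structure equation for the lowest component $\om^i_{-(k+1)}$ on two frame vectors dual to elements of $F$, every contribution of $\frac12[\om,\om]$ to the $v^0\ot e_i$-slot carries exactly one of the forms $\om_x,\om_h,\om^l_j,\om_y$ (because $V$ is abelian there are no $V\we V$ brackets), and each of these forms annihilates the $V$-directions. Therefore $\al(c)(A_1,A_2)$ equals the $v^0\ot W$-component of the frame bracket $[\xi_{A_1},\xi_{A_2}]$ for $A_1,A_2\in F$, and the whole problem reduces to computing these components.

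The core step is to compute this $v^0\ot W$-component and recognize it as a coboundary. Writing $\xi^b_q=-[\xi_x,\xi^b_{q+1}]+(\text{corrections})$ and applying the Jacobi identity repeatedly, I would express $[\xi^a_p,\xi^b_q]\bmod F$ through $\mathrm{ad}(\xi_x)$ of brackets with shifted indices. Since $\mathrm{ad}(\xi_x)$ sends the $v^1$-component into the $v^0$-component while no other component of $V$ contributes to the $v^0$-slot, this recursion should yield two facts: that the $v^0\ot W$-part of $[\xi^a_p,\xi^b_q]$ vanishes whenever both $p,q\ge 2$ (the imprint of the flatness of the underlying contact distribution), and that on the surviving pairs, where one of the indices equals $1$, it has exactly the form $(\delta\mu)(A_1,A_2)=\mu(A_1).A_2-\mu(A_2).A_1\bmod F$ of the operator~\eqref{l_delta}, where $\mu\in\Hom(F,\R x)$, $\mu(v^r\ot e_i)=\phi^r_i x$, is the $1$-cochain recording the $\R x$-coefficients $\phi^r_i$ of the corrections. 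Here one uses that $x.(v^s\ot w)=v^{s-1}\ot w$ is nonzero modulo $F$ only for $s=1$, which is precisely why $\delta\mu$ is supported on the pairs with one index equal to $1$. This gives $\al(c)=\delta\mu\in\im\delta$.

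I expect the main obstacle to be the control of the normalization corrections. The naive coordinate frame has vanishing $V$-$V$ brackets, so all of the $\Hom(\we^2 V,V)$ curvature is produced by the corrections passing from this frame to the genuine normal one; one must verify that after the full Jacobi recursion these $f$-dependent terms assemble precisely into the single coboundary $\delta\mu$ — in particular that the entire deep part with both indices $\ge 2$ cancels and that the surviving terms depend on the indices only through $\mu$. This is exactly where the explicit coordinate computation of the connection carried out in Section~\ref{sec:param} is indispensable.
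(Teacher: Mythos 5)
Your argument is essentially the paper's own proof read through the dual frame: the paper evaluates the structure equation for the deepest component $\Om^i_{-k-1}$, observes that $\om^i_{-k-1}=\th^i_{-k-1}$ exactly so that $d\om^i_{-k-1}=\th_x\we\th^i_{-k}$, substitutes $\th_x=-\om_x+\sum_{s}\bar B^{-s}_j\om^j_{-s}$, and reads off $\al(c)=\delta(\al)$ with $\al(v^{k+1-s}\ot e_j)=-\bar B^{-s}_j\,x$ --- which is precisely your $1$-cochain $\mu$ of $\R x$-corrections, supported on pairs with one factor in $v^1\ot W$ for exactly the reason you give ($x.(v^s\ot w)\notin F$ only for $s=1$). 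The only point where you overestimate the work is the final ``obstacle'': no Jacobi recursion and no control of the normalization conditions is needed, since the corrections enter the $v^0\ot W$-slot only through this single cochain, and accordingly the statement holds for every regular adapted Cartan connection, not just the normal one.
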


Let $\bar \alpha\colon \Hom(V,\sltw)\to\Hom(F,\sltw/\langle h,y \rangle)\cong\Hom(F,\R x)$ be the canonical projection. Then the following diagram is commutative:
\begin{equation}\label{02}
\begin{CD}
  \Hom(V,\sltw)    @>\Sop^1>>  \Hom (\we^2 V,V) \\
@VV\bar\alpha V        @VV\alpha V \\
\Hom(F,\R x)    @>\delta>> \Hom (\we^2 F, W).
\end{CD}
\end{equation}
In particular, we see that the elements from $\im \Sop^1\subset \Hom(\wedge^2 V, V)$ automatically satisfy the condition from Proposition~\ref{prop1}. 

Let us now describe all elements of $\Inv_x (\Hom (\we^2 V,V)/ \im \Sop^1)$ whose representatives $c\in \Hom(\wedge^2 V, V)$ satisfy the condition from Proposition~\ref{prop1}.

\begin{thm}\label{thm:E02} 
For $k\ge 3$ the the space
\[ 
\{ [c]\in \Inv_x (\Hom_{+}(\wedge^2 V,V)/\im \Sop^1) \mid \alpha(c)\in \im \delta\}
\]
is concentrated in degree 2 and is isomorphic (as a $\glm$-module) to $S^2(W^*)\ot W$.
\end{thm}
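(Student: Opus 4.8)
The plan is to first linearize the defining condition using the commutative square~\eqref{02}, and only then to reduce the statement to a tractable representation-theoretic computation.

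First I would show the purely algebraic identity
\[
\{c\in\Hom(\we^2 V,V)\mid \alpha(c)\in\im\delta\}=\ker\alpha+\im\Sop^1 .
\]
The inclusion $\supseteq$ is immediate: by commutativity of~\eqref{02} the $\sltw$-component $\phi$ of any cochain satisfies $\alpha\Sop^1(\phi)=\delta\bar\alpha(\phi)\in\im\delta$, while the $\glm$-component of $\Sop^1(\phi)$ preserves the $V_k$-grading of $V$ and hence maps $\we^2 F$ into $F$, so it lies in $\ker\alpha$. For $\subseteq$, given $\alpha(c)=\delta\psi$ I lift $\psi$ to a $\tilde\psi\in\Hom(V,\sltw)$ with $\bar\alpha(\tilde\psi)=\psi$ (possible since $\bar\alpha$ is a surjection of vector spaces, $\R x\subset\sltw$) and note that $c-\Sop^1(\tilde\psi)\in\ker\alpha$. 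Passing to the quotient by $\im\Sop^1$ and using $\alpha(\im\Sop^1)\subseteq\im\delta$, this identifies the space of the theorem with $\Inv_x$ of the positive part of $\ker\bar\alpha_*$, where $\bar\alpha_*\colon M:=\Hom(\we^2 V,V)/\im\Sop^1\to\operatorname{coker}\delta$ is the map induced by $\alpha$.

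Next I would exploit that on $\Hom(\we^2 V,V)$ the degree grading is an affine function of the $\sltw$-weight: the homogeneous component of degree $r$ is exactly the $h$-weight space of weight $k+2-2r$, so that $\Inv_x$ of it computes the multiplicity of the irreducible $V_{k+2-2r}$, and the whole problem can be run isotype by isotype. Decomposing $\we^2 V=(\we^2 V_k\ot S^2 W)\oplus(S^2 V_k\ot\we^2 W)$ and applying Clebsch--Gordan to $\Hom(\we^2 V,V)=\we^2 V^*\ot V$ gives the $\sltw\times\glm$-decomposition. Since $\Sop^1$ is injective and $\sltw\times\glm$-equivariant, $M$ is obtained by deleting the summands of $\Hom(V,\ga)=V_k\ot W^*\ot(\sltw\oplus\glm)$; a short computation via $V_k\ot V_2=V_{k+2}\oplus V_k\oplus V_{k-2}$ shows these occupy only the isotypes $V_{k+2},V_k,V_{k-2}$, i.e.\ degrees $0,1,2$. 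Hence $\im\Sop^1$ is invisible in every degree $r\ge 3$, and there $M_r=\Hom(\we^2 V,V)_r$.

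The decisive remaining input is the condition $c\in\ker\bar\alpha_*$. Here I would use that $\delta$ feels only the bottom layer $v^1\ot W$ of $F$: since $x\cdot(v^j\ot w)=v^{j-1}\ot w$ is nonzero modulo $F$ only for $j=1$, the image $\im\delta$ consists precisely of the maps $\we^2 F\to W$ of the explicit form $(\delta\om)(v^a\ot w_1,v^b\ot w_2)=\om_b(w_2)[a{=}1]w_1-\om_a(w_1)[b{=}1]w_2$ with $\om_j\in W^*$. Evaluating $\alpha$ on the highest weight representatives produced in the previous paragraph and matching against this form, I expect that $\bar\alpha_*$ is injective on $\Inv_x(M_r)$ for $r=1$ and for all $r\ge 3$, so that these degrees contribute nothing, whereas in degree $r=2$ the kernel is cut out to be exactly the $\glm$-submodule $S^2(W^*)\ot W$ (the symmetric part surviving because the antisymmetric and contraction components of $\alpha(c)$ are forced into $\im\delta$ or killed). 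This simultaneously fixes the degree and the $\glm$-type.

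The hard part will be the degree-by-degree injectivity of $\bar\alpha_*$ on the $x$-invariants in degrees $r\ge 3$: this is precisely the mechanism by which the condition of Proposition~\ref{prop1} annihilates the ``spurious'' positive cohomology, it has no counterpart in the parabolic setting, and so it must be verified directly on explicit highest weight vectors rather than deduced formally. I also expect the small cases $m=2$ and the smallest admissible $k$ to need separate checks, since there the Clebsch--Gordan multiplicities and the $\glm$-decompositions of $\we^2 W$ and $S^2 W$ degenerate.
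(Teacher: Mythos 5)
Your reduction is sound and, modulo phrasing, coincides with the first step of the paper's own argument: the identity $\{c\mid\alpha(c)\in\im\delta\}=\ker\alpha+\im\Sop^1$ (via surjectivity of $\bar\alpha$ and commutativity of~\eqref{02}) is exactly how the paper justifies replacing the condition $\alpha(c)\in\im\delta$ by $\alpha(c)=0$, and your observation that $\Inv_x(\im\Sop^1)$ sits only in the isotypes $V_{k+2},V_k,V_{k-2}$, i.e.\ degrees $0,1,2$, is correct. (Minor imprecision: $(\im\Sop^1)_r\ne 0$ for $r\ge 3$ in general; what is true, and what you need, is that its $x$-invariant part vanishes there.)

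The gap is that the entire content of the theorem --- injectivity of $\bar\alpha_*$ on $\Inv_x(M_r)$ for $r\ne 2$ and the identification of its kernel in degree $2$ with $S^2(W^*)\ot W$ --- is stated as ``I expect that'' and deferred to an unperformed case-by-case evaluation of $\alpha$ on highest weight vectors. Since $\Inv_x(\Hom(\we^2V,V)_r)$ has multiplicity growing linearly in $k$ (roughly $k/2$ copies of each of $S^2W^*\ot W$ and $\we^2W^*\ot W$ in each admissible degree), this is a genuinely large computation, and nothing in the proposal carries it out or explains structurally why it closes. The paper avoids it with a device you are missing: Lemma~\ref{l3}, which says that composition with $\pi_W\colon V\to V/F=W$ is injective on $\Inv_x\Hom(\we^2 V,V)$ and has image $\ker x^{k+1}|_{\Hom(\we^2 V,W)}$. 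Combined with the normalization $\alpha(c)=0$ (so that $\pi_W\circ c$ kills $\we^2 F$), this converts the whole problem into one explicit linear system $x^{k+1}.v=0$ on $\Hom(\we^2 W+W\ot F,W)$, whose solution space is read off as ${v^0}^*\ot {v^k}^*\ot W^*\ot W^*\ot W$ plus $\langle v^0\we v^{k-1}\rangle^*\ot S^2W^*\ot W$; after that only the short check that no nonzero combination of the $\pi_W\Sop^1(\om_i)$ lies in $\ker\alpha$ in degree $2$ remains. Without this lemma or an equivalent mechanism, your plan is a correct reformulation of the statement rather than a proof of it.
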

\begin{proof}
We shall need the following additonal result.
\begin{lem}\label{l3}
The restriction $\pi^x_W$ of the map $\pi_W$ to  $\Inv_x \Hom (\we^2 V,V)$ is injective.
Moreover the image of the map $\pi^x_W$ is equal to \[\ker x^{k+1}|_{\Hom(\wedge^2 V,W)}.\]
\end{lem}
\begin{proof}
  Indeed, suppose $c\in \Inv_x\Hom(\wedge^2 V, V)$. Decompose $c$ as 
\[
c = \sum_{i=0}^k c_i e_i,
\]
where $c_i \in \Hom(\wedge^2 V, v^i\otimes W)$. Then we have
\[
x.c = \sum_{i=0}^k(x.c_i)e_i +
\sum_{i=0}^{k-1}c_{i+1} e_i = 0.
\]
Hence, $c_i = - x.c_{i-1}$ for all $i=1,\dots,k$. Therefore,
$c_0=0$ implies that $c_i=0$ for all $i>0$ and, thus,
$c = 0$.

The second part of the lemma follows directly from the equality
\[
x.c_k = (-1)^{k+1}x^{k+1}.c_0 = 0.
\]
\end{proof}

From~\eqref{02} it follows that for any $[c]\in \Inv_x (\Hom(\wedge^2 V,V)/\im \Sop^1)$ satisfying $\alpha(c)\in \im \delta$ we can assume that $\alpha(c)=0$ and $x.c=0$. By Lemma~\ref{l3} we need to compute the space:
\begin{align*} T&=\Inv_{x^{k+1}}\pi_W(\ker\alpha) \\
&= \Inv_{x^{k+1}} \left( \Hom(\we^2 W+W\ot F,W) \right)
\end{align*}
modulo $\pi_W(\Inv_x \im \Sop^1 \cap \ker \alpha)$.

All elements from $\Hom(\we^2 W+W\ot F,W)$ have the following form:
\[v={v^0}^*e_{i_1}^*\we {v^i}^*e_{i_2}^*\ot A_i^{i_1,i_2}+
{v^1}^*e_{i_1}^*\we {v^{i-1}}^*e_{i_2}^*\ot\beta_i^{i_2} e_{i_1}. \]
Then the action of $x^{k+1}$ on $v$ is
\begin{align*}
x^{k+1}.v&=(-1)^{k+1}\sum_{j=0}^{k+1} C_{k+1}^j 
{v^j}^*e_{i_1}^*\we {v^{i+k+1-j}}^*e_{i_2}^*\ot A_i^{i_1,i_2}\\
&+\sum_{j=1}^{k+1} C_{k+1}^{j-1} 
{v^j}^*e_{i_1}^*\we {v^{i+k+1-j}}^*e_{i_2}^*\ot \beta_i^{i_2} e_{i_1}
\end{align*}
Decompose the right hand side in the standard basis of $\we^2 V^*$. Then from $x^{k+1}.v=0$ we get:
\begin{itemize}
\item for $i<j<\frac{i+k+1}{2}$ a coefficient at ${v^j}^*e_{i_1}^*\we {v^{i+k+1-j}}^*e_{i_2}^*$ gives:
\begin{equation}\label{eq7} C^j_{k+1}A^{i_1,i_2}_i- C^{i+k+1-j}_{k+1}A^{i_2,i_1}_i+
 C_{k+1}^{j-1}\beta_i^{i_2} e_{i_1} -
 C_{k+1}^{i+k-j}\beta_i^{i_1} e_{i_2}=0;
\end{equation}
\item for $j=\frac{i+k+1}{2}$ a coefficient at ${v^j}^*e_{i_1}^*\we {v^{j}}^*e_{i_2}^*$ gives:
\begin{equation}\label{eq8} C^j_{k+1}(A^{i_1,i_2}_i-A^{i_2,i_1}_i)+
 C_{k+1}^{j-1}(\beta_i^{i_2} e_{i_1}-\beta_i^{i_1} e_{i_2}) =0;\end{equation}
\end{itemize}

Equations (\ref{eq7}-\ref{eq8}) imply that for every $\beta_i$, $i<k-1$ there exists at most one tensor $A_i$ which satisfies equations (\ref{eq7}-\ref{eq8}). Moreover if $\beta_i=0$ then $A_i$ should be zero. Using equation~\eqref{eq8} we conclude the same for $\beta_{k-1}$ and for the antisymmetric part of the tensor $A_{k-1}$.

This can be reformulated as follows. The space $\we^2 V^*$ is decomposed into the direct sum 
\[ 
\we^2 V^* = \we^2 V_k^*\ot S^2(W^*)+ S^2(V^*_k)\ot\we^2 W^*.
\]
Using this decomposition, we can check directly that elements from $ {v^0}^*\ot {v^k}^*\ot W^*\ot W^*\ot W$ and  $\langle v^0\we v^{k-1}\rangle^*\ot S^2W^*\ot W$ are $x^{k+1}$-invariant and belong to $\ker\alpha$. Then the space $\Inv_{x^{k+1}}\pi_W(\ker\alpha)$ is equal to
\[ T={v^0}^*\ot {v^k}^*\ot W^*\ot W^*\ot W +\langle v^0\we v^{k-1}\rangle^*\ot S^2W^*\ot W.
\]

It remains to factor the space $T$ by elements from $\pi_W(\Inv_x \im \Sop^1)$ lying in $\ker \alpha$.  Note that $\pi_W\Sop^1\left(\langle {v^k}^* \rangle\ot W^*\ot\glm\right)$ is $x$-invariant, belongs to $\ker \alpha$ and is equal to:
\[ 
{v^0}^*\ot {v^k}^*\ot W^*\ot W^*\ot W. 
\]
Therefore, we can assume that 
\[
T\subset \langle v^0\we v^{k-1}\rangle^*\ot S^2W^*\ot W 
\]
and is concentrated in degree 2. The space of all degree 2 elements in $\Inv_x\Hom(V,\sltw)$ is generated by 
\[
\om_i= 2 ( v^{k-2} \ot e_i)^*\ot x + (k-1)(  v^{k-1} \ot e_i)^*\ot h + k(k-1)( v^k \ot e_i)^*\ot y.
\]
$i=1,\dots,m$. However, for $k\ge 3$ any non-zero linear combination of $\pi_W\Sop^1(\om_i)$ does not lie in $\ker\alpha$.
\end{proof}
The explicit formulas for the corresponding fundamental invariant are computed in the next section and given in Theorem~\ref{thm_I2}.

\subsection{The structure of the space $E^{1,1}_2$}
As shown in~\cite{dou01} for the case of a scalar ODE, this part of the cohomology space corresponds to generalized Wilczynski invariants for the systems of ODEs. For completeness we provide a purely algebraic description of this space. 

According to the Proposition~\ref{l1}, the space $E^{1,1}_2$ is isomorphic to 
\[ 
H^1(\R x,\Hom(V,V)/\im \Sop^0),
\]
where $\Sop^0$ is an inclusion of Lie algebra $\ga$ into $\gl(V)$. Lemma~\ref{l2} implies that we should describe the set of $y$-invariant elements in $\sltw$-module $\gl(V)/\ga$.

The $\ga$-module $\gl(V)$ is isomorphic to $\gl(V_k)\ot\gl(W).$ Let's identify Lie algebra $\ga$ with its image in $\gl(V).$ In particular, we denote as $y$ the image of $y\in\sltw$ in $\gl(V).$ 
\begin{thm}
The space of $y$-invariant elements in the $\ga$-module $\gl(V)/\ga$ is the sum of the following $\glm$-modules:
\begin{align*}
A_2 &=\R y\ot \gsl(W)\\
A_{i+1}&=\R y^{i}\ot \gl(W),\quad i=2,\dots,k
\end{align*}
For every element $\phi\in A_i$ the corresponding element $c_\phi\in E^{1,1}_2$ of the form $c_\phi\colon x\to\phi$ has degree $i$.
\end{thm}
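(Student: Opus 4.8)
The plan is to work entirely inside the $\ga$-module $\gl(V)\cong\gl(V_k)\ot\gl(W)$, on which $\sltw$ acts by commutator on the first factor and $\glm$ acts on the second. Since $y\in\sltw$ is a lowering operator, an element is $y$-invariant precisely when it is a lowest weight vector, so the task is to decompose $\gl(V)/\ga$ into irreducible $\sltw$-modules and read off one lowest weight line from each summand. First I would analyze the first factor: the image of $y$ in $\gl(V_k)=\End(V_k)$ is a regular nilpotent operator, so its centralizer (the $y$-invariants of $\End(V_k)$ under the adjoint action) is exactly the $(k+1)$-dimensional space $\langle y^0,y^1,\dots,y^k\rangle$ spanned by its powers. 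Equivalently, under the Clebsch--Gordan decomposition $\End(V_k)\cong\bigoplus_{j=0}^k V_{2j}$ the element $y^j$ is, up to scale, the lowest weight vector of the summand $V_{2j}$, since $y^j$ commutes with $y$ and has $h$-weight $-2j$.

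Next I would locate the image of $\ga$ inside this decomposition. The copy of $\sltw$ sits in $V_2\ot\R I_W$ (its image in $\End(V_k)$ is the adjoint summand $V_2$, tensored with the identity of $W$), while $\glm$ sits in $\R I_{V_k}\ot\End(W)$, i.e.\ in the $V_0$ summand tensored with all of $\End(W)$. Writing
\[
\gl(V)=\bigoplus_{j=0}^k V_{2j}\ot\End(W),
\]
the subalgebra $\ga$ therefore occupies all of the $j=0$ block and exactly the line $V_2\ot\R I_W$ inside the $j=1$ block, and nothing for $j\ge2$. Passing to the quotient gives
\[
\gl(V)/\ga \;\cong\; V_2\ot\bigl(\End(W)/\R I_W\bigr)\ \op\ \bigoplus_{j=2}^k V_{2j}\ot\End(W),
\]
and $\End(W)/\R I_W\cong\gsl(W)$. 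Taking the lowest weight line of each $V_{2j}$, represented by $y^j$, then yields $A_2=\R y\ot\gsl(W)$ from $j=1$ and $A_{i+1}=\R y^{\,i}\ot\gl(W)$ from $j=i$, $2\le i\le k$, which is exactly the asserted list of $\glm$-modules.

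It remains to compute the degree of the cochain $c_\phi\colon x\mapsto\phi$ attached to $\phi=y^{\,p}\ot B$, viewed as the $2$-cochain $(x,v)\mapsto\phi(v)$ with values in $V\subset\g$. Because $y\in\g_1$, the operator $y^{\,p}$ raises the grading of $V$ by $p$, while $x\in\g_{-1}$; hence for homogeneous $v$ one has $\deg\phi(v)-\deg x-\deg v=p+1$, so $c_\phi$ has degree $p+1$. For $A_2$ this gives degree $2$ and for $A_{i+1}=\R y^{\,i}\ot\gl(W)$ degree $i+1$, i.e.\ every $A_i$ is concentrated in degree $i$, as claimed. The one point needing care is the passage to $y$-invariants of the quotient: rather than arguing that invariants commute with the quotient (which would use complete reducibility of $\sltw$-modules to split the sequence $0\to\ga\to\gl(V)\to\gl(V)/\ga\to0$), I prefer the direct route above, decomposing $\gl(V)/\ga$ itself into irreducibles so that the lowest weight vectors can simply be read off. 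The only genuinely delicate bookkeeping is the loss of the central line $\R I_W$ in the $j=1$ block, which is precisely what reduces $\gl(W)$ to $\gsl(W)$ in degree $2$.
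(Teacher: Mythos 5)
Your proof is correct and follows essentially the same route as the paper: both rest on the Clebsch--Gordan decomposition $\gl(V_k)\cong V_0\op V_2\op\cdots\op V_{2k}$, the identification of the powers $y^j$ as the ($y$-annihilated) lowest weight vectors of the summands $V_{2j}$, and the observation that the image of $\ga$ occupies exactly $V_0\ot\gl(W)$ plus the line $\R y\ot\Id_W$. Your extra care about passing invariants through the quotient (handled in the paper implicitly via complete reducibility) and your explicit verification of the degree count are welcome but do not change the argument.
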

\begin{proof}
The decomposition of the $\sltw$-module $\gl(V_k)$ is well known:
\[ 
\gl(V_k)=V_0\op V_2\op\cdots\op V_{2k} .
\]
Note that all endomorphisms $y^i$, $i=0,\dots,k$ are $y$-invariant and linearly independent. Therefore the space of $y$-invariant elements is the sum of submodules $\R y^i\ot\gl(W)$ for $i=0,\dots,k$. 

The space of $y$-invariant elements in $\ga$ is nothing else but $\R y\op\glm$. It is not hard to see that under the natural inclusion $\ga\hookrightarrow\gl(V)$ the space $\glm$ goes to $\R y^0\ot\gl(W)$ and $\R y$ goes to  $\R y\ot \Id_W$. Finally, we can identify the space $ \R y\ot\gl(W)/\R y\ot\Id_W$ with $ \R y\ot\gsl(W)$.
\end{proof}
Each of the submodules $A_i$, $i=2,\dots,k$ corresponds to the generalized Wilczynski invariant $W_i$ of degree $i$.

\section{Parametric computation of the normal Cartan connection}\label{sec:param}
In this section we provide an explicit formula for the invariant $I_2$, which is described by Theorem~\ref{thm:E02},  give the proof to Proposition~\ref{prop1} and show that the part $\ker \Sop^2$ of $E^{0,2}_2$ in case $m=2$, $k=3$ does not produce any non-trivial invariants.

For every regular Cartan connection adapted to the equation~\eqref{eq1} we can choose a section $s\colon\E\to\cG$ such that the pullback of the connection form to~$\E$ is:
 \[
 \omega =\sum_{r=1}^{k+1}\omega^i_{-r} v^{k+1-r}\otimes e_i + \omega_x x  + \omega_h h +
\omega^i_j e^j_i + \omega_y y ,
\]
where
\begin{align*}
\omega_{-r}^i &=\theta_{-r}^i + \sum_{s=r+1}^{k+1}A_{j,-r}^{i,-s}\theta_{-s}^j, 
\\
\omega_x &=-\theta_x + \sum_{s=2}^{k+1}B_j^{-s}\theta_{-s}^j ,
\\
\omega_h &=\sum_{s=1}^{k+1}C_j^{-s}\theta_{-s}^j,
\\
\omega^i_j & =D^{i,x}_j \theta_x+\sum_{s=1}^{k+1}D_{j,l}^{i,-s}\theta_{-s}^l ,
\\
\omega_y & =E^x \theta_x +  \sum_{s=1}^{k+1}E_j^{-s}\theta_{-s}^j.
\end{align*}
and the forms $\th_x$, $\th^i_{-r}$ are given by:
\begin{align*}
  \theta_x & =dx, \\
  \theta^i_{-1} & =dy^i_k-f^i\,dx,&\\
  \theta^i_{-r} & =dy^i_{k+1-r}-y^i_{k+2-r}\,dx, \quad 2\le r\le k+1.
\end{align*}

Let $\Omega$ be the curvature tensor of~$\omega $. We use the following notation:
\[
\Omega  =\sum_{r=1}^{k+1}\Omega^i_{-r} v^{k+1-r}\otimes e_i + \Omega_x x + \Omega_h h + \Omega_i^j e_j^i + \Omega_y y 
\]

\subsection{Proof of Proposition~\ref{prop1}}\label{ss:thm1}
The $\Hom(\wedge^2 F, W)$ part  of the curvature function $c$ corresponds to the coefficients of $\Omega^i_{-k-1}$ at $\omega^{j_1}_{-s_1}\wedge \omega^{j_2}_{-s_2}$, $s_1,s_2=2,\dots,k+1$. We have:
\begin{multline*}
\Om^i_{-k-1} = \, d \omega_{-k-1}^i + \omega_x \wedge \omega_{-k}^i + k\omega_h \wedge
\omega_{-k-1}^i  +\omega_j^i \wedge \omega_{-k-1}^j = \\
d\om^i_{-k-1}+\om_x\wedge \om^i_{-k} \mod \langle \om^p_{-k-1} \rangle = \\
d\th^i_{-k-1}+\om_x\wedge\om^i_{-k} = \th_x \wedge \th^i_{-k} + \om_x\wedge\om^i_{-k} \mod \langle \om^p_{-k-1} \rangle.
\end{multline*}
Using the fact that $\om^i_{-k} = \th^i_{-k} \mod \langle \om^p_{-k-1} \rangle$ and that
\[
\th_x = -\om_x + \sum_{s=2}^{k+1} \bar B^{-s}_j \om^j_{-s}
\]
for some functions $\bar B^{-s}_j$ (expressed polynomially through $B^{-s}_j$ and $A^{i,-s}_{j,-r}$), we further get:
\begin{multline}\label{eq:Omk1}
\Om^i_{-k-1} = (-\om_x + \sum_{s=2}^{k+1} \bar B^{-s}_j \om^j_{-s})\wedge \om^i_{-k} + \om_x\wedge\om^i_{-k} = \\
\sum_{s=2}^{k} \bar B^{-s}_j \om^j_{-s}\wedge \om^i_{-k} \mod \langle \om^p_{-k-1} \rangle.
\end{multline}
Define the map $\alpha\in \Hom(F,U)$ by:
\begin{align*}
&\alpha\colon v^{k+1-s}\otimes e_j \mapsto -\bar B^{-s}_j x,\quad s=2,\dots,k.
\end{align*}
Then equation~\eqref{eq:Omk1} means that the $\Hom(\wedge^2 F, W)$-part of the structure function $c$ is equal exactly to $\delta(\alpha)$. In particular, we see that the component of $\gamma(c)$ lying in $\Hom(\wedge^2 F, W)/\delta \left(\Hom(F,U)\right)$ vanishes identically for any regular Cartan connection associated with system~\eqref{eq1}.

\subsection{Computation of $I_2$}
To find the explicit expression for the fundamental invariant prescribed by Theorem~\ref{thm:E02} it is sufficient to compute the curvature of the normal Cartan connection up to degree~$2$. 

There are no fundamental invariants in degree $1$. So, all coefficients of $\Omega$ in this degree should vanish. We have:
\begin{align*}
\Omega_x&=\,d \omega _x + 2\omega _h \wedge \omega_x\equiv B_j^{-2}\theta_x\we\theta_{-1}^j
\\
&+2C_j^{-2}\theta_{-1}^j\we\theta_x  \mod \left\langle \om_{-2}^p,\dots,\om_{-k-1}^p \right\rangle ,
\\
\Omega_{-1}^i&=\,d \omega_{-1}^i  - k \omega_h \wedge
\omega_{-1}^i+ k \omega_y \wedge
\omega_{-2}^i + \omega_j^i \wedge \omega_{-1}^j 
\\
&\equiv \frac{\p f^i}{\p y_k^j}\theta_x\we\theta_{-1}^j+A^{i,-2}_{j,-1}\theta_x\we\theta_{-1}^j-k C^{-1}_l\theta_{-1}^l\we \theta_{-1}^i
\\
&+ D^{i,x}_{j}\theta_x\we\theta_{-1}^j+ D^{i,-1}_{j,l}\theta_{-1}^l\we \theta_{-1}^j
\mod \left\langle \om_{-2}^p,\dots,\om_{-k-1}^p \ \right\rangle ,
\\
\Omega_{-r}^i & =\, d \omega_{-r}^i + \omega_x \wedge \omega_{-r+1}^i + (2r-k-2)\omega_h \wedge
\omega_{-r}^i 
\\
 & +(k+1-r)r\,\omega_y \wedge \omega_{-r-1}^i
 +\omega_j^i \wedge \omega_{-r}^j 
\\
& \equiv (A^{i,-r-1}_{j,-r}-A^{i,-r}_{j,-r+1}+D^{i,x}_{j})\theta_x\we\theta_{-r}^j
\\
& + (2r-k-2)C^{-1}_{l} \theta_{-1}^l\we \theta_{-r}^i
 + D^{i,-1}_{j,l}\theta_{-1}^l\we \theta_{-r}^j
 \\ &
\mod \left\langle \omega_{-s_1}^{p_1}\we \omega_{-s_1}^{p_1} \,|\, s_1+s_2 > r+1 \right\rangle, \quad r=2,\dots,k,
\\
\Omega_{-k-1}^i & =\, d \omega_{-k-1}^i + \omega_x \wedge \omega_{-k}^i + k\omega_h \wedge
\omega_{-k-1}^i  +\omega_j^i \wedge \omega_{-k-1}^j 
 \\
 &\equiv (-A^{i,-k-1}_{j,-k}+D^{i,x}_{j})\theta_x\we\theta_{-k-1}^j + kC^{-1}_{l} \theta_{-1}^l\we \theta_{-k-1}^i + \\
 & D^{i,-1}_{j,l}\theta_{-1}^l\we \theta_{-k-1}^j 
\mod \left\langle \omega_{-s_1}^{p_1}\we \omega_{-s_1}^{p_1} \,|\, s_1+s_2 > k+2 \right\rangle.
\end{align*}
Setting the terms above equal to $0$ we get:
\begin{align*}
& C_j^{-1}=B_j^{-2}=D_{j,l}^{i,-1}=0,
\\
& D^{i,x}_j=-\frac{1}{k+1}\frac{\p f^i}{\p y^j_k},\quad
A^{i,-r-1}_{j,-r}=-\frac{(k+1-r)}{k+1}\frac{\p f^i}{\p y^j_k},
\end{align*}
where $r=1,\dots,k$.

Now proceed to the second degree. We consider only $\Hom(\wedge^2 V,V)$ part of the curvature. This means  that we want to compute only the coefficients of $\Omega^i_{-r}$ at $\om_{-r_1}^i\we\om_{-r_2}^j.$ Modulo $\left\langle \omega_x , \omega_{-s_1}^{p_1}\we \omega_{-s_1}^{p_1} \,|\, s_1+s_2 > k+2 \right\rangle$ we have:
\begin{align*}
\Omega_{-1}^i &\equiv \left(-kC_l^{-2}\delta_j^i+D^{i,-2}_{j,l}-\frac{\p A^{i,-2}_{l,-1}}{\p y_k^j}-k\delta^i_l E_j^{-1}\right)\theta_{-2}^l\we\theta_{-1}^j,
\\
\Omega_{-2}^i &\equiv \left(-(k-2)(C_l^{-2}\delta_j^i-C_j^{-2}\delta_l^i)+D^{i,-2}_{j,l}-D^{i,-2}_{l,}\right)\theta_{-2}^l\we \theta_{-2}^j
\\
&+\left(\frac{\p A^{i,-3}_{j,-2}}{\p y_k^l}+2(k-1)\delta_j^iE^{-1}_l-\delta_l^i B^{-3}_j\right)\theta_{-1}^l\we\theta_{-3}^j 
\\
\Omega_{-3}^i &\equiv \left(-(k-4)C_l^{-2}\delta_j^i+D^{i,-2}_{j,l}-\delta_l^i B^{-3}_j\right)\theta_{-2}^l\we \theta_{-3}^j
\\
&+\left(\frac{\p A^{i,-4}_{j,-3}}{\p y_k^l}+3(k-2)\delta_j^iE^{-1}_l\right)\theta_{-1}^l\we\theta_{-4}^j 
\\
\Omega_{-r}^i &\equiv B^{-3}_l\theta_{-3}^l\we \theta_{-r+1}^i+\left((2r-k-2)C_l^{-2}\delta_j^i+D^{i,-2}_{j,l}\right)\theta_{-2}^l\we \theta_{-r}^j
\\
&+\left(\frac{\p A^{i,-r-1}_{j,-r}}{\p y_k^l}+r(k+1-r)\delta_j^iE^{-1}_l\right)\theta_{-1}^l\we\theta_{-r-1}^j,\, r=4,\dots,k,
\\
\Omega_{-k-1}^i &\equiv B^{-3}_l\theta_{-3}^l\we \theta_{-k}^i+ \left(kC_l^{-2}\delta_j^i+D^{i,-2}_{j,l}\right)\theta_{-2}^l \we\theta_{-k-1}^j.
\end{align*}

Denote coefficients of the curvature function at $\om^i_{-r}\we\om^j_{-s}$ and $\om^i_{-r}\we\om_x$ by $\Om[\om^i_{-r}\we\om^j_{-s}]$ and $\Om[\om^i_{-r}\we\om_x]$ respectively. Canonical normalization conditions in the second degree are given by:
\begin{align}
\label{e_s}\sum_{i;r=2..k+1}\Om^i_{-r+1}[\om_{-1}^l\we\om_{-r}^i]-2\Om_h[\om_{-1}^l\we\om_x]=0,
\\
\sum_{i;r=1..k+1} \Om^i_{-r}[\om_{-2}^l\we\om_{-r}^i]-k\Om^j_{i}[\om_{-1}^l\we\om_x]=0,
\\
\begin{aligned}\sum_{i;r=1..k+1} (2r-k-2)\Om^i_{-r}[\om_{-2}^l\we\om_{-r}^i]&+\\+2k\Om_{h}[\om_{-1}^l\we\om_x]&+2\Om_{x}[\om_{-2}^l\we\om_x]=0,
\end{aligned}
\\
\label{e_e}\sum_{i;r=1..k} r(k-r-1)\Om^i_{-r-1}[\om_{-3}^l\we\om_{-r}^i]+2(k-1)\Om_{x}[\om_{-2}^l\we\om_x]=0,
\\
\frac{1}{(k-r)(r+1)}\Om^i_{-r-1}[\om_x\we\om_{-r-2}^j]-\frac{1}{(k+1-r)r}\Om_{-r}^i[\om_x\we\om_{-r-1}^j]=0,
\\
\sum_{i;r=2..k+1}\Om^i_{-r+1}[\om_{-r}^i\we\om_x]=0.
\end{align}
where 
\begin{align*}
\Om^i_{j}[\om_{-1}^l\we\om_x]&=D^{i,-2}_{j,k}-\frac{\p D^{i,x}_j}{\p y^l_k}, 
\\
\Om_{h}[\om_{-1}^l\we\om_x]&=C^{-2}_l-E_l^{-1},
\\
\Om_{x}[\om_{-2}^l\we\om_x]&=B^{-3}_l+2C_l^{-2}.
\end{align*}
The solution of equations \eqref{e_s}-\eqref{e_e} is:
\begin{align}
D_{j,l}^{i,-2}&=-\frac{1}{k+1}\frac{\p^2 f^i }{\p y_k^j \p y_k^l}
\\
\label{B3}
E_l^{-1}=C_l^{-2}=-\frac12 B_l^{-3}&=\frac{3(k-1)}{(k^2 m+k m+6)(k+1)}\frac{\p}{\p y^l_k}\sum_{i=1}^m \frac{\p f^i }{\p y_k^j} 
\end{align}

From solutions above we can conclude that $\Hom(\we^2 V,V)$ part of the curvature function depends only on coefficients of  tensor 
\begin{equation}\label{I2}
\frac{\p^2 f^i }{\p y_k^j \p y_k^l}.
\end{equation}
Moreover, the whole tensor \eqref{I2} is a relative invariant of the structure since: 
\begin{align*} 
\Omega_{-1}^i[\om_{-2}^l\we\om_{-1}^j]+\Omega_{-k-1}^i[\om_{-2}^j\we\om_{-k-1}^l]+\Omega_{-k-1}^i[\om_{-2}^l\we\om_{-k-1}^j]
\\
=2D^{i,-2}_{j,l}=-\frac{2}{k+1}\frac{\p^2 f^i }{\p y_k^j \p y_k^l}.
\end{align*}

We summarize the computations of this section in the following theorem.
\begin{thm}\label{thm_I2}
For the system of ODEs~\eqref{eq1} of the order $\ge 3$ the following symmetric tensor is a relative 
differential invariant of degree $2$:
\[
(I_2)^i_{jl}=\frac{\p^2 f^i }{\p y_k^j \p y_k^l}.
\]
\end{thm}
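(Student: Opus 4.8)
The plan is to realize $(I_2)^i_{jl}$ as a fixed multiple of a component of the curvature function of the normal Cartan connection $\om_\E$ associated with~\eqref{eq1}. Once this is done, relative invariance is automatic: by the equivariance property~\eqref{eq:3} the structure function $c$ transforms under the group $H$ through the finite-dimensional representation $C^2(\g_-,\g)$, so its projection onto any $H$-submodule is a relative invariant in the sense of Definition~\ref{dfn:1}. Thus the entire content of the statement reduces to the explicit computation identifying the relevant curvature coefficient with $\p^2 f^i/\p y_k^j\p y_k^l$, while the degree of the invariant is read off from the degree of that component of $c$.

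Concretely, I would first fix a regular adapted Cartan connection and pull it back along a section $s\colon\E\to\cG$ so that $\om$ acquires the parametrized form displayed above, with the adapted coframe $\th_x,\th^i_{-r}$ and undetermined coefficient functions $A,B,C,D,E$. The next step is to compute the curvature $\Om=d\om+\tfrac12[\om,\om]$ by expanding the bracket using the Lie algebra structure of $\g$; the numerical factors such as $(2r-k-2)$ and $r(k+1-r)$ that appear are exactly the coefficients describing the action of $h$ and $y$ on the irreducible $\sltw$-module $V_k$. I would organize this computation by homogeneity degree.

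In degree $1$ there are no fundamental invariants, so every degree-$1$ component of $\Om$ must vanish. Imposing this yields $C^{-1}_j=B^{-2}_j=D^{i,-1}_{j,l}=0$ and determines $D^{i,x}_j$ together with the subleading coefficients $A^{i,-r-1}_{j,-r}$ as explicit multiples of $\p f^i/\p y_k^j$. Passing to degree $2$, I would extract only the $\Hom(\we^2 V,V)$-part of $\Om$ and impose the normalization conditions expressing co-closedness $\partial^* c=0$ of the normal connection; these are the linear relations~\eqref{e_s}--\eqref{e_e}. Solving this linear system gives $D^{i,-2}_{j,l}=-\tfrac1{k+1}\,\p^2 f^i/\p y_k^j\p y_k^l$, while the remaining trace-type coefficients $E,C,B$ are fixed by the trace of the same tensor, so that the whole $\Hom(\we^2 V,V)$-part of $c$ depends only on~\eqref{I2}.

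The final step is to form the symmetric combination $\Om^i_{-1}[\om_{-2}^l\we\om_{-1}^j]+\Om^i_{-k-1}[\om_{-2}^j\we\om_{-k-1}^l]+\Om^i_{-k-1}[\om_{-2}^l\we\om_{-k-1}^j]$ of curvature coefficients and observe that it equals $2D^{i,-2}_{j,l}=-\tfrac2{k+1}\,(I_2)^i_{jl}$. Since the left-hand side is a fixed linear combination of components of the structure function of the canonical connection, it spans a copy of $S^2(W^*)\ot W$ in degree $2$ and is therefore a relative invariant, whence so is $(I_2)^i_{jl}$. I expect the degree-$2$ step to be the main obstacle: almost all of the work, and essentially all of the risk of error, lies in writing the structure equations with the correct $\sltw$-representation coefficients and in solving the normalization system~\eqref{e_s}--\eqref{e_e} consistently, since a mistake in any single bracket propagates through the entire system.
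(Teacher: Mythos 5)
Your proposal follows essentially the same route as the paper's Section~\ref{sec:param}: parametrize a regular adapted connection via a section, normalize degree~$1$ to fix $C^{-1},B^{-2},D^{i,-1}_{j,l},D^{i,x}_j$ and the $A$'s, then solve the co-closedness conditions~\eqref{e_s}--\eqref{e_e} in degree~$2$ to identify $D^{i,-2}_{j,l}$ with $-\tfrac1{k+1}\p^2 f^i/\p y_k^j\p y_k^l$ and exhibit $(I_2)^i_{jl}$ through the same symmetric combination of curvature coefficients. The argument is correct and matches the paper's proof in both structure and detail.
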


\subsection{Vanishing of the invariant corresponding to $\ker \Sop^2$ for systems of two ODEs of 4-th order}
According to the proof of Lemma~\ref{lem5}, the fundamental invariant $I$ corresponding to $\ker\Sop^2$ 
appears in the component $\Om_x[\om_{-3}^1\we\om_{-3}^2]$ of the curvature function function. Namely, this coefficient is equal to $I$ modulo fundamental invariants of lower order and their covariant derivatives.

We are going to show that $\Om_x[\om_{-3}^1\we\om_{-3}^2]$ for the normal Cartan connection belongs to differential ideal generated by $I_2$ and, therefore, the generator of $\ker\Sop^2$  does not corresponds to any new fundamental invariants. We have:
\[ 
\Omega_x=\,d \omega _x + 2\omega _h \wedge \omega_x\equiv d(-\theta_x + B_j^{-3}\theta_{-3}^j + B_j^{-4}\theta_{-4}^j)\mod \left\langle \om_x \right\rangle.
\]
Modulo $ \left\langle \om_x,\om_{-1}^s,\om_{-2}^s,\om_{-4}^s \right\rangle $ we get:
\begin{multline*}  
d( B_j^{-3}\theta_{-3}^j )=\frac{\p B_j^{-3}}{\p \om_{-r}^i} \om_{-r}^i\we \theta_{-3}^j+\frac{\p B_j^{-3}}{\p \om_x} \om_x\we \theta_{-3}^j+ B_j^{-3}\theta_x\we\theta_{-2}^j
\\
\equiv \left(\frac{\p B_2^{-3}}{\p \om_{-3}^1}-\frac{\p B_1^{-3}}{\p \om_{-3}^2} \right) \om_{-3}^1\we \om_{-3}^2+ B_j^{-3}A_{i,-2}^{j,-3}B^{-3}_l\om_{-3}^i\we \om_{-3}^l,
\end{multline*}
where $\frac{\p}{\p \om_{-r}^i}$ are covariant derivatives along fundamental vector fields dual to $\om_{-r}^i $. 

From~\eqref{B3} it follows that $B^{-3}_j$ is expressed linearly in terms of components of the fundamental invariant~$I_2$. Therefore $\Om_x[\om_{-3}^1\we\om_{-3}^2]$ belongs to differential ideal generated by $I_2$, and  the generator of $\ker\Sop^2$ does not lead to any new fundamental invariants.

\section{Cohomology of contact Lie algebras of higher order}\label{coh_M}
As shown in~\cite{mor88, dkm}, the Tanaka symbol of the contact distribution on $J^{k}(\R,\R^m)$, can be identified with the Lie algebra of polynomial vector fields on $\R^{m+1}$, equipped with a grading that depends on $k$. More precisely, let $U$ be a one-dimensional vector space spanned by $x$ and $W$ be an $m$-dimensional vector space spanned by $v^0\otimes e_i$, $i=1,\dots,m$. In particular, $U$ has degree $-1$, and elements of $W$ have degree $-k-1$. Then the symbol algebra of the contact distribution on $J^{k}(\R,\R^m)$ is 
\[
\bar \g = S(U^*\op W^*)\ot(U\op W).
\]

The Lie algebra $\g$ is, obviously, included into the Lie algebra $\bar\g$. Additionally we should note that $\g_-=\bar\g_-$. The inclusion above induces the map between cohomologies. We are going to study this map in more detail.

Let $V=\op_{i=0}^kS^i(U^*)\ot W = V_k \ot W$ and 
\[
F=\op_{i=1}^kS^i(U^*)\ot W=\langle v^1 ,\dots, v^k\rangle\ot W. 
\]
Similar to~\eqref{l_delta} we define the linear operator
\[ 
\delta\colon \Hom(\we^p F,U)\to \Hom(\we^{p+1}F,V/F)=\Hom(\we^{p+1}F,W)
\]
\begin{equation}\label{l_delta2}
(\delta \om)(A_1,\dots,A_{p+1})=\left(\sum_{i=1}^{p+1}{(-1)}^i\om(A_1,\dots,\hat A_i,\dots,A_{p+1}).A_i \right)/F.
\end{equation}
In this setting the structure of the cohomology space $H(\bar \g_-,\bar \g)$ is described by the following result of Morimoto~\cite{mor88}.

\begin{thm*}[Morimoto]
One has the following long exact sequence:
\begin{multline*}\cdots\to\Hom(\we^p F,W)\to H^p(\bar \g_-,\bar \g) 
\\
\to \Hom(\we^p F,U)
\overset{\delta}\to\Hom(\we^{p+1} F,W)\to\cdots  .
\end{multline*}
\end{thm*}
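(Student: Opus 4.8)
The plan is to realize this long exact sequence as the one induced by a short exact sequence of $\bar\g_-$-\emph{modules}, rather than by a spectral sequence. Working in the realization of $\bar\g=S(U^*\op W^*)\ot(U\op W)$ as polynomial vector fields on $\R^{m+1}$, write $\mathcal O=S(U^*\op W^*)$ for the functions and split the coefficients by the direction of the constant field: $\bar\g=(\mathcal O\ot U)\op(\mathcal O\ot W)$. A direct bracket computation shows that $\mathcal O\ot W$ (the fields $g\,\partial_{y^l}$) is a $\bar\g_-$-submodule, while the quotient is $\mathcal O\ot U$; moreover on each summand the action is of the form $\rho\ot\Id$, where $\rho$ is the natural action on functions and $U$, $W$ carry the trivial action. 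Hence
\[
0\to \mathcal O\ot W\to\bar\g\to\mathcal O\ot U\to0
\]
is a short exact sequence of $\bar\g_-$-modules, and $H^p(\bar\g_-,\mathcal O\ot W)=H^p(\bar\g_-,\mathcal O)\ot W$, $H^p(\bar\g_-,\mathcal O\ot U)=H^p(\bar\g_-,\mathcal O)\ot U$. The associated long exact sequence in $H^*(\bar\g_-,-)$ already has the shape of the claimed one.

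Second, I would compute $H^*(\bar\g_-,\mathcal O)$. The key observation is that $\bar\g_-$ acts transitively near the origin of $\R^{m+1}$ and the isotropy subalgebra there is exactly the abelian subalgebra $F=\op_{i\ge1}S^i(U^*)\ot W$ of fields vanishing at the origin. Consequently $\mathcal O$ coincides, degree by degree in the grading (where all homogeneous pieces are finite-dimensional), with the coinduced module $\mathrm{Coind}_F^{\bar\g_-}\R$, and Shapiro's lemma gives
\[
H^p(\bar\g_-,\mathcal O)\cong H^p(F,\R)=\we^p F^*,
\]
the last equality because $F$ is abelian. Therefore $H^p(\bar\g_-,\mathcal O\ot W)\cong\Hom(\we^pF,W)$ and $H^p(\bar\g_-,\mathcal O\ot U)\cong\Hom(\we^pF,U)$, which are precisely the outer terms of Morimoto's sequence.

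Third, I would identify the connecting homomorphism with $\delta$. The extension class of the module sequence above is carried by the bracket term that the projection $\bar\g\to\mathcal O\ot U$ discards, namely the $\partial_{y}$-component of $[x^i\partial_{y^j},g\,\partial_x]$. Restricted to the subalgebra $F$ and to the trivial coefficient $U$, this sends a cochain $\om$ to $\sum_i(-1)^i\,\om(\dots,\hat A_i,\dots).A_i$ taken modulo $F$, with values in $V/F\cong W$, which is exactly formula~\eqref{l_delta2} defining $\delta$. Transporting the connecting map through the two Shapiro isomorphisms then yields $\delta\colon\Hom(\we^pF,U)\to\Hom(\we^{p+1}F,W)$, and splicing the short exact sequences produces the stated long exact sequence. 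A low-degree check is reassuring: $H^0(\bar\g_-,\bar\g)=W$, and the sequence reads $0\to W\to H^0\to U\xrightarrow{\delta}\Hom(F,W)$ with $\delta$ injective on $U$.

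I expect the main obstacle to be the middle step: making the Shapiro/van~Est identification $H^*(\bar\g_-,\mathcal O)\cong\we^\bullet F^*$ fully rigorous. One must match the vector-field action on functions with the algebraic coinduced action and control the passage between polynomial and formal functions; both are handled by the grading, since the computation splits into finite-dimensional homogeneous components. A secondary, more bookkeeping-heavy point is pinning down the signs in the identification of the connecting map with $\delta$, but conceptually this is forced by the extension class being the discarded bracket component.
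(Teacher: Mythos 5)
The paper does not actually prove this statement --- it is imported from Morimoto~\cite{mor88} as an external citation --- so there is no internal proof to compare yours against; judged on its own terms, your reconstruction is sound and very likely reproduces the original mechanism. The splitting $\bar\g=(\mathcal{O}\ot U)\op(\mathcal{O}\ot W)$ with $\mathcal{O}=S(U^*\op W^*)$ does give a short exact sequence of $\bar\g_-$-modules with actions $\rho\ot\Id$ on the sub and the quotient, because the coefficients of elements of $\bar\g_-$ depend on $x$ alone, so the only term of $[\xi,g\,\p_x]$ not accounted for by $\rho\ot\Id$ lies in $\mathcal{O}\ot W$; the isotropy of $\bar\g_-$ at the origin is exactly $F$, which is abelian, so the outer terms collapse to $\we^pF^*\ot W$ and $\we^pF^*\ot U$; and your low-degree check $H^0(\bar\g_-,\bar\g)=W$ with $\delta$ injective on $U$ is correct. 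Two steps carry the real weight and must be written out rather than asserted. First, Shapiro's lemma as usually stated applies to the full coinduced module $\Hom_{U(F)}(U(\bar\g_-),\R)$, which is the \emph{formal} completion of $\mathcal{O}$; you need the graded version, which does hold here because the generators $U^*,W^*$ have positive degrees $1$ and $k+1$, so every homogeneous component of $\we^p\bar\g_-^*\ot\mathcal{O}$ is finite-dimensional and the polynomial and formal coefficient modules agree degree by degree. Second, identifying the connecting homomorphism with~\eqref{l_delta2} is a genuine cochain computation: you must realize the Shapiro isomorphism at the cochain level (restriction to $\we^pF$ followed by evaluation at the origin), lift a class to a $\bar\g$-valued cochain, use abelianity of $F$ to kill the bracket terms, and verify that what survives is $\sum_i(-1)^i\om(\dots,\hat A_i,\dots).A_i\bmod F$ with the stated sign; the appeal to ``the extension class is the discarded bracket component'' determines this map only up to the choice of splitting, so the explicit computation is not optional. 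Neither point is a wrong idea --- both are standard verifications --- but as written they are the places where your argument is still a sketch rather than a proof.
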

As an immediate corollary of this theorem we obtain that $H^1(\bar \g_-,\bar\g)=0$ for $m\ge 2$. Thus, we have the following exact sequence:
\begin{multline*}
0\to\Hom(F,U)\overset{\delta}\to\Hom(\we^2 F,W)\to H^2(\bar \g_-,\bar \g) 
\\
\to \Hom(\we^2 F,U)\overset{\delta}\to\Hom(\we^{3} F,W). 
\end{multline*}
We see that $H^2(\bar \g_-,\bar\g)$ consists of $2$ parts: 
\[ \Hom(\we^2 F,W)/\delta\Hom(F,U)\] and 
\[\ker\left(\delta\colon \Hom(\we^2 F,U)\to\Hom(\we^{3} F,W)\right).\]

Consider now a map 
\[
\gamma\colon H^2(\g_-,\g) \to H^2(\bar \g_-,\bar\g)
\]
which is induced by the inclusion of Lie algebras $\g\hookrightarrow\bar\g$. 
 
\begin{thm}\label{thm:gamma}
The invariants of the normal Cartan connection corresponding to the part of curvature function taking values in $\ker\gamma \cap H^2_{+}(\g_{-},\g)\subset H^2(\g_{-},\g)$ form the complete system of fundamental invariants.
\end{thm}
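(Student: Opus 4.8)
The plan is to reduce the statement to the purely algebraic identity
\[
\ker\gamma\cap H^2_{+}(\g_-,\g) = E_2^{1,1}\op \bigl(S^2(W^*)\ot W\bigr),
\]
whose right hand side is exactly the effective part already singled out in Sections~\ref{sec:cohom}--\ref{sec:param}: the summand $E_2^{1,1}$ carries the generalized Wilczynski invariants $W_2,\dots,W_{k+1}$, and the degree-$2$ space $S^2(W^*)\ot W\subset E_2^{0,2}$ carries the invariant $I_2$ of Theorem~\ref{thm_I2}. Once this holds the theorem is immediate, since its right hand side is precisely the collection of non-vanishing fundamental invariants. To analyze $\gamma$ I would use Morimoto's presentation of the target as an extension
\[
0\to \Hom(\we^2 F,W)/\im\delta \overset{j}{\to} H^2(\bar\g_-,\bar\g)\overset{\rho}{\to}\ker\bigl(\delta\colon\Hom(\we^2 F,U)\to\Hom(\we^3 F,W)\bigr)\to 0,
\]
and record, by naturality of $\g\hookrightarrow\bar\g$ with respect to the filtration underlying this sequence, that $\rho\circ\gamma([c])$ equals the $U$-part of $c|_{\we^2 F}$ (the $\R x$-component of its $\ga$-value, $\R x=U$ sitting inside $\sltw\subset\ga$), and that when this vanishes $\gamma([c])=j\bigl(\alpha(c)\bmod\im\delta\bigr)$. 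The $p=1$ shadow of this naturality is the commutative square~\eqref{02}. Consequently $\gamma([c])=0$ if and only if the $U$-part of $c|_{\we^2 F}$ vanishes \emph{and} $\alpha(c)\in\im\delta$.

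I would then run through the summands of $H^2(\g_-,\g)=E_2^{1,1}\op E_2^{0,2}$. Every class of $E_2^{1,1}$ has a representative $c_\phi\colon x\mapsto\phi$ that vanishes on $\we^2 V\supset\we^2 F$, so both conditions hold trivially and $E_2^{1,1}\subset\ker\gamma$; all Wilczynski invariants are thus retained. On the summand $\Inv_x(\Hom(\we^2 V,V)/\im\Sop^1)$ the representatives are $V$-valued, hence their $U$-part on $\we^2 F$ vanishes identically and the vanishing of $\gamma([c])$ collapses to the single condition $\alpha(c)\in\im\delta$. By Theorem~\ref{thm:E02} the positive-degree classes meeting this condition form exactly the degree-$2$ space $S^2(W^*)\ot W$, i.e.\ $I_2$, whereas every other positive-degree class has $\alpha(c)\notin\im\delta$ and therefore a nonzero image in $\Hom(\we^2 F,W)/\im\delta$. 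This reproduces the effective part on these two summands.

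The remaining summand $\ker\Sop^2$ is nonzero only for $m=2$, $k=3$, and here I would invoke the $G_2$-model of Lemma~\ref{lem5}. Its distinguished cocycle $\Phi$ sends the pair $(v^1\ot e_1,v^1\ot e_2)$, both of which lie in $F$, to $[\gamma_{11},\gamma_{21}]\in G_2(\beta)=\R x$, a nonzero multiple of $x\in U$; thus the $U$-part of $\Phi|_{\we^2 F}$ is a nonzero element of $\ker\delta$, so $\rho\circ\gamma(\Phi)\ne0$ and $\ker\Sop^2\cap\ker\gamma=0$. This matches Section~\ref{sec:param}, where the same class was shown to yield an invariant lying in the differential ideal generated by $I_2$. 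Assembling the three cases establishes the displayed identity; combined with Proposition~\ref{prop1} (which forces the curvature of the normal connection to satisfy $\alpha(c)\in\im\delta$) and the vanishing computation of Section~\ref{sec:param}, it shows that the harmonic part of the curvature of $\om_\E$ takes values in $\ker\gamma\cap H^2_{+}$, with non-vanishing components exactly $W_2,\dots,W_{k+1}$ and $I_2$.

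I expect the principal obstacle to be the naturality statement of the first paragraph: identifying the two components of $\gamma$ (the composite $\rho\circ\gamma$ and, on $\ker\rho$, the class $\alpha(\cdot)\bmod\im\delta$) with ``restrict to $\we^2 F$ and project to $U$, resp.\ to $W$''. This requires unwinding the chain-level maps of Morimoto's long exact sequence, checking that $\g\hookrightarrow\bar\g$ preserves the relevant two-step filtration, and verifying that these maps descend to cohomology independently of the representative (for the $W$-valued companion this is exactly~\eqref{02}; for $\rho$ it is built into the exactness asserted by Morimoto). Granting this, what remains is the degree bookkeeping of Theorem~\ref{thm:E02} and the single bracket $[\gamma_{11},\gamma_{21}]$ computed above.
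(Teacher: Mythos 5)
Your proposal is correct and follows essentially the same route as the paper: it kills $\gamma$ on $E_2^{1,1}$ by choosing representatives in $\Hom(\R x\ot V,V)$, identifies $\gamma([c])=0$ on $\Inv_x(\Hom(\we^2V,V)/\im\Sop^1)$ with the condition $\alpha(c)\in\im\delta$ so that Theorem~\ref{thm:E02} and Proposition~\ref{prop1} finish that summand, and shows $\gamma$ is nonzero on $\ker\Sop^2$ via the $G_2$ bracket landing in $\R x\subset\ker\delta$. Your explicit flagging of the naturality of $\gamma$ with respect to Morimoto's exact sequence is a point the paper itself only sketches (and acknowledges as lacking a conceptual proof), but it does not change the argument.
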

\begin{proof}
From the above description of $H^2(\g_{-},\bar\g)$ it follows immediately that $\gamma$ vanishes identically on the space $E^{1,1}_2$, which corresponds to generalized Wilczynski invariants. Indeed, we can choose representatives of $E^{1,1}_2$ from elements of $\Hom(\R x \otimes V, V)$. But all these elements are mapped to $0$ by $\gamma$.

So, we need to consider only the restriction of the map $\gamma$ to the space $E^{0,2}_2=\Inv_x H^2(V,\g)$. By Theorem~\ref{thm:E02}, the effective part of $E^{0,2}_2$ corresponds to elements $[c]\in\Inv_x(\Hom_{+}(\wedge^2V,V)/\im\Sop^1)$ satisfying the inclusion $\alpha(c)\in\im \delta$ (see Proposition~\ref{prop1}). But it is exactly the reformulation of the statement that $\gamma([c]) = 0$. 

It is also easy to see that the restriction of $\gamma$ to $\ker\Sop^2\subset E^{0,2}_2$ for $m=2,k=3$ is non-zero. Indeed, $\Hom(\we^2 F, U)$ component of this map sends $v^1\ot e_1\we v^1\ot e_2$ to $\R x$ and this part belongs to the $\ker\delta$. Therefore, $\gamma(\Phi)\ne 0$ for any non-zero element $\Phi\in \ker \Sop^2$.
\end{proof}

We note that the condition $\alpha(c)\in\im \delta$ comes from Proposition~\ref{prop1}, which is based on parametric computation of the normal Cartan connection. It would be natural to expect that there is also a coordinate-free proof of the above Theorem. But this requires the development of the theory of (normal) Cartan connections with infinite-dimensional structure group and lies beyond the scope of this paper.

\end{document}